\begin{document}

\newcommand{\C}{{\mathbb{C}}}
\newcommand{\R}{{\mathbb{R}}}
\newcommand{\Z}{{\mathbb{Z}}}
\newcommand{\N}{{\mathbb{N}}}
\newcommand{\Q}{B}
\newcommand{\q}{\left}
\newcommand{\w}{\right}
\newcommand{\Ninf}{\N_\infty}
\newcommand{\Vol}[1]{\mathrm{Vol}\q(#1\w)}
\newcommand{\B}[4]{B_{\q(#1,#2\w)}\q(#3,#4\w)}
\newcommand{\CjN}[3]{\q\|#1\w\|_{C^{#2}\q(#3\w)}}
\newcommand{\Cj}[2]{C^{#1}\q( #2\w)}
\newcommand{\grad}{\bigtriangledown}
\newcommand{\sI}[2]{\mathcal{I}\q(#1,#2 \w)}
\newcommand{\Det}[1]{\det_{#1\times #1}}
\newcommand{\sK}{\mathcal{K}}
\newcommand{\sKt}{\widetilde{\mathcal{K}}}
\newcommand{\sA}{\mathcal{A}}
\newcommand{\sB}{\mathcal{B}}
\newcommand{\sC}{\mathcal{C}}
\newcommand{\sD}{\mathcal{D}}
\newcommand{\sS}{\mathcal{S}}
\newcommand{\sF}{\mathcal{F}}
\newcommand{\sT}{\mathcal{T}}
\newcommand{\sQ}{\mathcal{Q}}
\newcommand{\sV}{\mathcal{V}}
\newcommand{\cV}{\q( \sV\w)}
\newcommand{\vsig}{\varsigma}
\newcommand{\vsigt}{\widetilde{\vsig}}
\newcommand{\dil}[2]{#1^{\q(#2\w)}}
\newcommand{\lA}{-\log_2 \sA}
\newcommand{\eh}{\widehat{e}}
\newcommand{\Ho}{\mathbb{H}^1}
\newcommand{\sd}{\sum d}
\newcommand{\dt}{\tilde{d}}
\newcommand{\dhc}{\hat{d}}
\newcommand{\Span}[1]{\mathrm{span}\q\{ #1 \w\}}
\newcommand{\dspan}[1]{\dim \Span{#1}}
\newcommand{\K}{K_0}
\newcommand{\ad}[1]{\mathrm{ad}\q( #1 \w)}
\newcommand{\LtOpN}[1]{\q\|#1\w\|_{L^2\rightarrow L^2}}
\newcommand{\LpOpN}[2]{\q\|#2\w\|_{L^{#1}\rightarrow L^{#1}}}
\newcommand{\LplqOpN}[3]{\q\|#3\w\|_{L^{#1}\q(\ell^{#2}\q(\N^\nu\w)\w)\rightarrow L^{#1}\q(\ell^{#2}\q(\N^\nu\w)\w)}}
\newcommand{\LpN}[2]{\q\|#2\w\|_{L^{#1}}}
\newcommand{\LplqN}[3]{\q\|#3\w\|_{L^{#1}\q(\ell^{#2}\q(\N^\nu\w)\w)}}
\newcommand{\Jac}{\mathrm{Jac}\:}
\newcommand{\kapt}{\widetilde{\kappa}}
\newcommand{\gt}{\widetilde{\gamma}}
\newcommand{\gtt}{\widetilde{\widetilde{\gamma}}}
\newcommand{\gh}{\widehat{\gamma}}
\newcommand{\Sh}{\widehat{S}}
\newcommand{\Wh}{\widehat{W}}
\newcommand{\Ih}{\widehat{I}}
\newcommand{\Wt}{\widetilde{W}}
\newcommand{\Xt}{\widetilde{X}}
\newcommand{\Tt}{\widetilde{T}}
\newcommand{\Nt}{\widetilde{N}}
\newcommand{\Phit}{\widetilde{\Phi}}
\newcommand{\Vh}{\widehat{V}}
\newcommand{\Xh}{\widehat{X}}
\newcommand{\sSh}{\widehat{\mathcal{S}}}
\newcommand{\sFh}{\widehat{\mathcal{F}}}
\newcommand{\thetah}{\widehat{\theta}}
\newcommand{\ct}{\widetilde{c}}
\newcommand{\at}{\tilde{a}}
\newcommand{\bt}{\tilde{b}}
\newcommand{\fg}{\mathfrak{g}}
\newcommand{\cC}{\q( \sC\w)}
\newcommand{\cG}{\q( \sC_{\fg}\w)}
\newcommand{\cJ}{\q( \sC_{J}\w)}
\newcommand{\cY}{\q( \sC_{Y}\w)}
\newcommand{\cZ}{\q( \sC_{Z}\w)}
\newcommand{\cYu}{\q( \sC_{Y}^u\w)}
\newcommand{\cGu}{\q( \sC_{\fg}^u\w)}
\newcommand{\cJu}{\q( \sC_{J}^u\w)}
\newcommand{\cZu}{\q( \sC_{Z}^u\w)}
\newcommand{\cYs}{\q( \sC_{Y}^s\w)}
\newcommand{\cGs}{\q( \sC_{\fg}^s\w)}
\newcommand{\cJs}{\q( \sC_{J}^s\w)}
\newcommand{\cZs}{\q( \sC_{Z}^s\w)}
\newcommand{\Bb}{\overline{B}}
\newcommand{\Qb}{\overline{\Q}}
\newcommand{\sP}{\mathcal{P}}
\newcommand{\sY}{\mathcal{Y}}
\newcommand{\sN}{\mathcal{N}}
\newcommand{\cH}{\q(\mathcal{H}\w)}
\newcommand{\Omegat}{\widetilde{\Omega}}
\newcommand{\Lie}{\mathrm{Lie}}
\newcommand{\thetat}{\widetilde{\theta}}
\newcommand{\etat}{\widetilde{\eta}}
\newcommand{\psit}{\widetilde{\psi}}
\newcommand{\supp}[1]{\mathrm{supp}\q(#1\w)}
\newcommand{\sM}{\mathcal{M}}
\newcommand{\sMt}{\widetilde{\mathcal{M}}}

\newcommand{\Lpp}[2]{L^{#1}\q(#2\w)}
\newcommand{\Lppn}[3]{\q\|#3\w\|_{\Lpp{#1}{#2}}}

\newtheorem{thm}{Theorem}[section]
\newtheorem{cor}[thm]{Corollary}
\newtheorem{prop}[thm]{Proposition}
\newtheorem{lemma}[thm]{Lemma}
\newtheorem{conj}[thm]{Conjecture}

\theoremstyle{remark}
\newtheorem{rmk}[thm]{Remark}

\theoremstyle{definition}
\newtheorem{defn}[thm]{Definition}

\theoremstyle{remark}
\newtheorem{example}[thm]{Example}

\theoremstyle{remark}
\newtheorem{step}{Step}

\numberwithin{equation}{section}

\title{Multi-parameter singular Radon transforms}
\author{Elias M. Stein and Brian Street\footnote{The second author was partially supported by NSF DMS-0802587.}}
\date{}

\maketitle
%\tableofcontents

\begin{abstract}
%The purpose of this note is to announce a theory developed
%in a three part series of papers by the authors.  This theory
%concerns operators of the form
The purpose of this announcement is to describe a development
given in a series of forthcoming papers by
the authors that concern operators of the form
\begin{equation*}
f\mapsto \psi\q(x\w) \int f\q(\gamma_t\q(x\w)\w) K\q(t\w)\: dt,
\end{equation*}
where $\gamma_t\q(x\w)=\gamma\q(t,x\w)$ is a $C^\infty$ function defined
on a neighborhood of the origin in $\q(t,x\w)\in \R^N\times \R^n$
satisfying $\gamma_0\q(x\w)\equiv x$, $K\q(t\w)$ is a ``multi-parameter
singular kernel'' supported near $t=0$, and $\psi$ is a cutoff function
supported near $x=0$.  This note concerns the case when $K$
is a ``product kernel.''  The goal is to give conditions
on $\gamma$ such that the above operator is bounded on $L^p$
for $1<p<\infty$.  
Associated maximal functions are also discussed.
The ``single-parameter'' case
when $K$ is a Calder\'on-Zygmund kernel was studied by
Christ, Nagel, Stein, and Wainger.
%The theory here generalizes (and strengthens) some of their results.
The theory here extends these results to the multi-parameter context
and also deals effectively with the case when $\gamma$ is
real-analytic.

\end{abstract}

\section{Introduction}
The purpose of this note is to announce the results from a three
part series of papers by the authors 
\cite{StreetMultiParameterSingRadonLt,
SteinStreetMultiParameterSingRadonLp,
StreetMultiParameterSingRadonAnal}, and to give an overview
of the main ideas in a somewhat simpler context.
The object is to study the $L^p$ ($1<p<\infty$) boundedness of operators of the form
\begin{equation}\label{EqnIntroT}
Tf\q(x\w) = \psi\q(x\w) \int f\q(\gamma_t\q(x\w)\w) K\q(t\w)\: dt,
\end{equation}
where $\gamma_t\q(x\w)=\gamma\q(t,x\w):\R^N_0\times \R^n_0\rightarrow\R^n$
is a $C^\infty$ function with $\gamma_0\q(x\w)\equiv x$, $\psi\in C_0^\infty\q(\R^n\w)$ is supported
on a small neighborhood of $0\in \R^n$, and $K$ is an appropriate ``multi-parameter''
singular kernel supported on a small neighborhood of $0\in \R^N$.
Here we have written $\R^N_0$ for a neighborhood of the origin
in $\R^N$.
%Here we have written $h:\R^n_0\rightarrow \R^n$ to denote that $h$
%is defined on a small neighborhood of $0\in \R^n$.
The case when $K$ is a Calder\'on-Zygmund kernel (that is,
when the number of parameters, $\nu$, equals $1$) was studied
in the paper of Christ, Nagel, Stein, and Wainger
\cite{ChristNagelSteinWaingerSingularAndMaximalRadonTransforms}.
Given the complexity of the formulation and proof of our general
results, we will here restrict ourselves to the case
when $K$ is a ``product kernel'' and give only an outline
of the main points of the argument contained
in
\cite{StreetMultiParameterSingRadonLt,
SteinStreetMultiParameterSingRadonLp,
StreetMultiParameterSingRadonAnal}.
Some of these results, valid in a more general setting,
are indicated in Section \ref{SectionGeneral}.
%In this note, we will focus on the case when $K$ is a so-called ``product kernel,''
%while in
%\cite{StreetMultiParameterSingRadonLt,
%SteinStreetMultiParameterSingRadonLp,
%StreetMultiParameterSingRadonAnal} a more general type of kernel is considered.
%
%To define the notion of a product kernel, decompose
%$\R^N=\R^{N_1}\times \cdots \times \R^{N_\nu}$, and for $t\in \R^N$
%write $t=\q(t_1,\ldots, t_\nu\w)$.
%A product kernel $K\q(t_1,\ldots, t_\nu\w)$ satisfies estimates
%of the form
%\begin{equation*}
%\q|\partial_{t_1}^{\alpha_1}\cdots \partial_{t_\nu}^{\alpha_\nu} K\q(t_1,\ldots, t_\nu\w)\w|\lesssim \q|t_1\w|^{-N_1-\q|\alpha_1\w|} \cdots \q|t_\nu\w|^{-N_{\nu}-\q|\alpha_\nu\w|},
%\end{equation*}
%along with certain ``cancellation conditions'' (see Definition \ref{DefnProdKer}).
%In particular, when $\nu=1$, product kernels are exactly
%the standard Calder\'on-Zygmund kernels.  Moreover, for
%$\nu>1$ a special case of product kernels is given by
%$K\q(t_1,\ldots, t_\nu\w)=K_1\q(t_1\w)\otimes \cdots \otimes K_\nu\q(t_\nu\w)$,
%where each $K_\mu$ is a Calder\'on-Zygmund kernel.

In the definition of a product kernel given in Section
\ref{SectionProductKernel},
we fix a decomposition of $\R^N$ into $\nu$ factors,
$\R^N=\R^{N_1}\times \cdots\times \R^{N_\nu}$, and
write $t=\q(t_1,\ldots, t_\nu\w)$. 
%Fix a decomposition of $\R^N$ into $\nu$ factors, $\R^N=\R^{N_1}\times \cdots \times \R^{N_\nu}$.
The main goal of the papers 
\cite{StreetMultiParameterSingRadonLt,SteinStreetMultiParameterSingRadonLp}
 is to give conditions on $\gamma$ for which 
the operator $T$ given by \eqref{EqnIntroT} is bounded on $L^p$ ($1<p<\infty$)
for {\it every} such product kernel (supported on a sufficiently small
neighborhood of $0\in \R^N$).
Under these same conditions, one also obtains the $L^p$ boundedness
of the corresponding maximal function:
\begin{equation}\label{EqnIntroM}
\sM f\q(x\w) = \psi\q(x\w) \sup_{0<\delta_1,\ldots, \delta_\nu<<1} \int_{\q|t\w|\leq 1} \q|f\q(\gamma_{\delta_1 t_1,\ldots, \delta_\nu t_\nu}\q(x\w)\w)\w|\: dt.
\end{equation}
%In this note, we discuss these conditions in the special case when
To simplify the presentation we limit ourselves to the special case when
$\gamma$ is given by
\begin{equation*}
\gamma_t\q(x\w)=\exp\q(\sum_{0<\q|\alpha\w|\leq L} t^\alpha X_\alpha\w)x,
\end{equation*}
where the $X_\alpha$ are $C^\infty$ vector fields.  I.e., $\gamma$ is an
exponential of a {\it finite} sum of vector fields.
In our cited work,
%\cite{StreetMultiParameterSingRadonLt,SteinStreetMultiParameterSingRadonLp,StreetMultiParameterSingRadonAnal},
more general $\gamma$ are considered.  See Section \ref{SectionGeneral} for some comments on this.

In \cite{StreetMultiParameterSingRadonAnal}, it is shown that when
$\gamma$ is assumed to be real analytic, many of the assumptions
necessary to carry out to proofs 
in our work
%of
%\cite{StreetMultiParameterSingRadonLt,SteinStreetMultiParameterSingRadonLp}
hold automatically.  This is discussed in Section \ref{SectionRealAnal}.

\subsection{Statement of main results}\label{SectionResults}
In this section, we state the main results from this paper.
Consider a $C^\infty$ function
\begin{equation}\label{EqnGammaFiniteExp}
\gamma_t\q(x\w) = \exp\q(\sum_{0<\q|\alpha\w|\leq L}t^\alpha X_\alpha\w) x,
\end{equation}
where the $X_\alpha$ are $C^\infty$ vector fields on $\R^n$.  Here, and in what follows,
we will be restricting attention to $\q(t,x\w)$
in some small neighborhood of the origin $\R^N\times \R^n$.

Decompose $\R^N=\R^{N_1}\times \cdots \times \R^{N_\nu}$ as in the introduction.
We will be considering product kernels $K\q(t_1,\ldots, t_\nu\w)$ relative
to this decomposition of $\R^N$ (see Definition \ref{DefnProdKer}).
For a multi-index $\alpha\in \N^{N}$, we obtain a corresponding decomposition
$\alpha=\q(\alpha_1,\ldots, \alpha_\nu\w)$ where $\alpha_\mu\in \N^{N_\mu}$
and $t^{\alpha} = \prod_\mu t_\mu^{\alpha_\mu}$.

Our goal is to give conditions on the vector fields $X_\alpha$
such that the operator $T$ given by \eqref{EqnIntroT}
and the operator $\sM$ given by \eqref{EqnIntroM}
are bounded on $L^p$ ($1<p<\infty$).

Corresponding to a multi-index $\alpha\in \N^\nu$, we assign
a formal ``degree'' $\deg\q(\alpha\w)=\q(\q|\alpha_1\w|,\ldots, \q|\alpha_\nu\w|\w)\in \N^\nu$.  This assigns to each vector field $X_\alpha$ the formal
degree $\deg\q(\alpha\w)$.  We say $\alpha$ is a ``pure power''
if $\deg\q(\alpha\w)$ is non-zero in precisely one component; otherwise
we say $\alpha$ is a ``non-pure power.''
This yields two finite sets of vector fields, paired with formal degrees:
\begin{equation*}
\begin{split}
\sP &:= \q\{\q(X_\alpha, \deg\q(\alpha\w)\w): \alpha\text{ is a pure power}\w\},\\
\sN &:= \q\{\q(X_\alpha, \deg\q(\alpha\w)\w): \alpha\text{ is a non-pure power}\w\}.\\
\end{split}
\end{equation*}
Let $\sS$ be the smallest set of vector fields paired with formal degrees such
that
\begin{itemize}
\item $\sP\subseteq \sS$ and
\item if $\q(X,d\w),\q(Y,e\w)\in \sS$, then $\q(\q[X,Y\w],d+e\w)\in \sS$.
\end{itemize}

We are now prepared to state the two assumptions we make on the vector
fields $X_\alpha$.
\begin{itemize}
\item A ``finite-type'' condition:  there exists a finite subset $\sF\subseteq \sS$ such that for every $\q(X,d\w)\in \sS$, we have
\begin{equation}\label{EqnFiniteTypeControl}
X = \sum_{\substack{\q(Y,e\w)\in \sF \\ e\leq d}} c_X^Y Y,
\end{equation}
where $c_X^Y\in C^\infty$, and $e\leq d$ means that the inequality
holds coordinatewise (that is, $e_\mu\leq d_\mu$ for each $\mu$).

\item An ``algebraic'' condition:  for every $\q(X,d\w)\in \sN$, there
exists a finite subset $\sF\subseteq \sS$ such that
\begin{equation}\label{EqnAlgControl}
X = \sum_{\substack{\q(Y,e\w)\in \sF \\ e\leq d}} c_X^Y Y,
\end{equation}
where $c_X^Y\in C^\infty$.  Note that, if the finite type condition holds,
one may take $\sF$ as in that condition.
\end{itemize}

\begin{thm}\label{ThmMainThm}
Under the above two conditions, the operator $T$ given by \eqref{EqnIntroT} (for any product kernel $K\q(t_1,\ldots, t_\nu\w)$ with small support)
and the operator $\sM$ given by \eqref{EqnIntroM} are bounded
on $L^p$ ($1<p<\infty$).
\end{thm}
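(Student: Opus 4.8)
The plan is to reduce the boundedness of $T$ and $\sM$ to a single, rescaled estimate by means of a quantitative multi-parameter Frobenius theorem, and then to reassemble the dyadic pieces of the product kernel by a multi-parameter almost-orthogonality argument. First I would use the two hypotheses to build the relevant geometry. The finite-type condition \eqref{EqnFiniteTypeControl} shows that the a priori infinite list $\sS$ is controlled, coordinatewise in the multi-degree, by the finite set $\sF$; to the vector fields of $\sF$, together with their formal $\N^\nu$-degrees, one attaches, for each base point $x_0$ near the origin and each multi-scale $\delta=(\delta_1,\dots,\delta_\nu)$ with $0<\delta_\mu\ll 1$, a Carnot--Carath\'eodory-type ball $B_{(x_0,\delta)}$, and one checks that these balls satisfy the multi-parameter doubling and engulfing properties needed later. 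The algebraic condition \eqref{EqnAlgControl} is what lets one absorb the ``non-pure power'' vector fields of $\sN$ into the geometry generated by $\sP$: modulo lower-order errors, the family $\gamma$ is then governed by its pure-power part, whose scaling structure is a genuine product $\delta_1^{(\cdot)}\cdots\delta_\nu^{(\cdot)}$ of one-parameter dilations. Without this absorption the mixed monomials $t^\alpha X_\alpha$ with $\alpha$ non-pure would obstruct any clean $\nu$-parameter rescaling, so this reduction is essential.

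Next I would establish the rescaling (quantitative multi-parameter Frobenius) theorem: for each $x_0$ and $\delta$ there is a chart $\Phi_{x_0,\delta}\colon B^n(0,1)\to B_{(x_0,\delta)}$ under which the $\delta$-scaled vector fields of $\sF$ pull back to vector fields that are uniformly $C^\infty$ on $B^n(0,1)$ and span $\R^n$ at every point, with bounds independent of $x_0$ and $\delta$, and under which $\gamma$ pulls back to a rescaled family $\gamma^{(\delta)}$ depending uniformly smoothly on $(t,x)$ with $\gamma^{(\delta)}_0=\mathrm{id}$. This is the multi-parameter analogue of the scaling results underlying \cite{ChristNagelSteinWaingerSingularAndMaximalRadonTransforms}; its proof rests on the two conditions above and on a quantitative Frobenius theorem adapted to the $\nu$-parameter dilation group. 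I expect this to be one of the two principal obstacles.

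I would then decompose the product kernel dyadically, $K=\sum_{j\in\Z^\nu}K_j$, where $K_j$ is supported where $|t_\mu|\sim 2^{-j_\mu}$ and satisfies the usual product-kernel size bounds together with cancellation in each block $t_\mu$. Conjugating the piece $T_j$ by $\Phi_{x_0,2^{-j}}$ reduces the $L^2$ bound for $T_j$ to a uniform estimate for a normalized operator built from $\gamma^{(\delta)}$ and a bump with cancellation; that uniform bound I would deduce from the non-isotropic Calder\'on--Zygmund theory on the graded nilpotent Lie group approximating the rescaled geometry, combined with an integration-by-parts argument using $\gamma^{(\delta)}_0=\mathrm{id}$. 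Summing $\sum_j T_j$ on $L^2$ then calls for a genuinely multi-parameter almost-orthogonality argument --- estimating $T_jT_k^*$ and $T_j^*T_k$ and summing the resulting decay (an exponential gain in each coordinate $|j_\mu-k_\mu|$) over $j,k\in\Z^\nu$ --- which must be carried out by playing the product cancellation of $K$ in each $t_\mu$ separately against the product scaling; since no off-the-shelf Cotlar--Stein lemma covers the multi-parameter case, this is the second principal obstacle.

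Finally, $L^p$ boundedness for $1<p<\infty$ would follow from the $L^2$ bound together with a Littlewood--Paley square-function argument adapted to the $\nu$-fold product structure (equivalently, by fitting $T$ into a multi-parameter singular-integral framework on $\R^n$ equipped with the balls $B_{(x_0,\delta)}$, after lifting so that $T$ becomes, modulo controllable errors, a product-type convolution operator). The maximal function $\sM$ is dominated pointwise by the multi-parameter Hardy--Littlewood maximal operator associated with the family $\{B_{(x_0,\delta)}\}$, whose $L^p$ boundedness follows from the doubling and engulfing properties from the first step; alternatively, after that reduction one may iterate in each parameter the one-parameter maximal estimates of \cite{ChristNagelSteinWaingerSingularAndMaximalRadonTransforms}. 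Full details, together with the more general $\gamma$ and the real-analytic case, are the subject of the forthcoming papers \cite{StreetMultiParameterSingRadonLt,SteinStreetMultiParameterSingRadonLp,StreetMultiParameterSingRadonAnal}.
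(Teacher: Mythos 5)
Your outline captures the broad architecture the paper actually uses --- quantitative Frobenius rescaling, dyadic decomposition of the product kernel, almost-orthogonality on $L^2$, and an adapted Littlewood--Paley theory for $L^p$ --- but several steps are either vague or genuinely wrong.

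The most serious gap is your treatment of $\sM$. You claim that $\sM$ is dominated by a multi-parameter Hardy--Littlewood maximal function on the balls $B_{(x_0,\delta)}$, or alternatively by iterating the one-parameter maximal estimates of \cite{ChristNagelSteinWaingerSingularAndMaximalRadonTransforms}. Neither reduction is available. Because of the non-pure-power terms, $\gamma_{2^{-j}t}$ does not factor as a composition of one-parameter flows, so iterating one-parameter bounds is not possible; nor do the $B_{(x_0,\delta)}$ satisfy the tensor-product covering structure needed for a strong-maximal estimate at this generality. The paper's actual route is quite different: it observes that unlike $T$, the averages $M_j$ carry no cancellation, and manufactures it by an inclusion--exclusion, setting $B_j=\sum_{E\subseteq\{1,\dots,\nu\}}(-1)^{|E|}A_{j_{E^c}}M_{j_E}$, arguing by induction on $\nu$ (the $E\subsetneq\{1,\dots,\nu\}$ terms being handled by the inductive hypothesis), and then proving an $L^p(\ell^2)$ estimate for the vector-valued operator $\sB_k\{f_j\}=\{B_jD_{j+k}f_j\}$ together with a Nagel--Stein--Wainger-style bootstrap to pass from $L^2$ to $L^p$. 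Your proposal omits all of this.

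A second gap concerns where the rescaling is applied and what the pure/non-pure distinction buys. You conjugate an individual piece $T_j$ by $\Phi_{x_0,2^{-j}}$ to get a uniform bound, but that only normalizes each piece; it does not yield the decay in $|j-k|$. The paper instead conjugates the product $T_j^*T_k$ by $\Phi_{x_0,j_0}$ at the mixed scale $j_0=j\wedge k$. The point is then that for a pure-power $\alpha$, one of $(k-j_0)\cdot\deg(\alpha)$ and $(j-j_0)\cdot\deg(\alpha)$ vanishes, so the corresponding pulled-back vector field $Y_\alpha$ appears unscaled in one of the two exponentials in $\theta_{t,s}$; this is exactly why the collection taken from both exponentials satisfies H\"ormander's condition uniformly, and it is the reason the pure/non-pure split is introduced at all. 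Your use of the algebraic condition as merely an ``absorption'' of the non-pure terms does not capture this. In the same vein, once $\|T_j^*T_k\|,\|T_jT_k^*\|\lesssim 2^{-\epsilon|j-k|}$ is established, the ordinary Cotlar--Stein lemma does apply as-is (the index set $\N^\nu$ causes no trouble), so the ``no off-the-shelf Cotlar--Stein'' obstacle you flag is not an obstacle; the paper's actual scheme for proving the decay is via interpolating a trivial $L^1$ bound with an $L^\infty$ bound for $(T_k^*T_jT_j^*T_k)^{n+1}$ after the Frobenius conjugation, using the smoothing effect of the H\"ormander system --- a mechanism your integration-by-parts remark does not reach. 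Finally, the passage from $L^2$ to $L^p$ is more involved than a square-function argument: the paper constructs an adapted Littlewood--Paley system $D_j=D_{j_1}^1\cdots D_{j_\nu}^\nu$ built from the pure-power vector fields, proves a reproducing formula, and then estimates the vector-valued operators $\sT_{k_1,k_2}\{f_j\}=\{D_jT_{j+k_1}D_{j+k_2}f_j\}$ on $L^p(\ell^2)$ by interpolating between $L^2(\ell^2)$, $L^1(\ell^1)$, and an $L^p(\ell^\infty)$ bound obtained from the (already established) maximal result. This chain is not reconstructible from your sketch.
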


\begin{rmk}
%Theorem \ref{ThmMainThm} is less general than the theorems
%proved in \cite{StreetMultiParameterSingRadonLt,
%SteinStreetMultiParameterSingRadonLp} where the following issues
%are addressed:
A more general version of our result can be found in our cited
work, where the following issues are addressed:
\begin{itemize}
\item $\gamma$ is not restricted to be of the form \eqref{EqnGammaFiniteExp} (see Section \ref{SectionGeneral}).

\item A more general class of kernels than product kernels is studied.

\item The sums \eqref{EqnFiniteTypeControl} and \eqref{EqnAlgControl}
are replaced with more general (and more complicated) conditions.
\end{itemize}
\end{rmk}

\begin{rmk}
Our general results for the case $\nu=1$ in fact have wider
scope than those in
\cite{ChristNagelSteinWaingerSingularAndMaximalRadonTransforms}.
See also the comments in Section \ref{SectionGeneral}.
Note that when $\nu=1$ the algebraic condition \eqref{EqnAlgControl}
is vacuous, and the finite type condition follows
from H\"ormander's condition:  that the Lie algebra generated
by the $\q\{X_\alpha\w\}$ spans the tangent space at $0$.
We remark that when $\nu>1$, H\"ormander's condition implies
neither the finite type condition, nor the algebraic condition.
For example, decompose $\R^2=\R\times \R$ and write $\q(t_1,t_2\w)\in \R\times \R$.  Then for $x\in \R$ ($n=1$)
\begin{equation}\label{EqnExNoAlg}
\gamma_{t_1,t_2}\q(x\w)=\exp\q(t_1^3 \frac{\partial}{\partial x} + t_1 t_2 \frac{\partial}{\partial x} + t_2^3 \frac{\partial}{\partial x}\w)x=x+t_1^3+t_1t_2+t_2^3
\end{equation}
does not satisfy the algebraic condition.
%\footnote{
This would have been
true, as well, if we had taken $t_1^2$ and $t_2^2$ instead of $t_1^3$ and $t_2^3$.  However, the more general algebraic condition in our cited work
allows for the case of $t_1^2$ and $t_2^2$.  Furthermore, there
exist product kernels $K\q(t_1,t_2\w)$ such that if
$\gamma$ is given by \eqref{EqnExNoAlg}, then operator $T$ given
by \eqref{EqnIntroT} is not bounded on $L^2$ (which is not true
if we use $t_1^2$ and $t_2^2$)--see \cite{StreetMultiParameterSingRadonLt}.
%}
For $\q(x,y\w)\in\R^2$, if we set $X_1=\frac{\partial}{\partial x}$, $X_2=e^{-\frac{1}{x^2}}\frac{\partial}{\partial y}$, and $X_3=\frac{\partial}{\partial y}$,
then
\begin{equation*}
\exp\q(t_1 X_1 + t_1^2 X_2 + t_2 X_3\w)\q(x,y\w)
\end{equation*}
does not satisfy the finite type condition, while $X_1$, $X_2$, and $X_3$
clearly satisfy H\"ormander's condition.
\end{rmk}

%\begin{rmk}
%The results in 
%\cite{StreetMultiParameterSingRadonLt,SteinStreetMultiParameterSingRadonLp}
%are more general than the corresponding results
%in the work of Christ, Nagel, Stein, and Wainger \cite{ChristNagelSteinWaingerSingularAndMaximalRadonTransforms}
%which studied the case $\nu=1$.
%In fact, the results in
%\cite{StreetMultiParameterSingRadonLt,SteinStreetMultiParameterSingRadonLp}
%are more general even if one restricts attention to the case $\nu=1$.
%In the special case when $\gamma$ is given by \eqref{EqnGammaFiniteExp}, this
%is easy to see from what we have already discussed.  In this case,
%the condition in \cite{ChristNagelSteinWaingerSingularAndMaximalRadonTransforms}
%is that the vector fields $X_\alpha$ satisfy H\"ormander's condition:
%that is, that the Lie algebra generated by the vector fields $X_\alpha$
%spans the tangent space to $\R^n$ at $0$.
%If $\nu=1$, the algebraic condition is vacuous and
%the finite type condition holds if the vector fields
%satisfy H\"ormander's condition.  
%Thus, the $L^p$ boundedness results of \cite{ChristNagelSteinWaingerSingularAndMaximalRadonTransforms}
%follow from Theorem \ref{ThmMainThm} in the special case
%that $\gamma$ is given by \eqref{EqnGammaFiniteExp}.
%We remark here that if $\nu>1$, the finite type
%condition does not necessarily hold if the vector fields
%satisfy H\"ormander's condition.
%\end{rmk}

\begin{rmk}
If the finite type condition holds, then the involutive distribution\footnote{Here,
we are using distribution to mean a $C^\infty$ module of vector fields, and an involutive distribution is one which is closed under Lie brackets.}
generated by the vector fields in $\sP$ is finitely generated as a $C^\infty$
module (the vector fields in $\sF$ generate it).
In this context, the Frobenius theorem applies to foliate the manifold
into leaves.
Further, if the algebraic condition also holds, then the vector fields
corresponding to the non-pure powers are tangent to these leaves. 
More is true:  the finite-type and algebraic conditions can be thought
of as ``scale invariant'' versions of the above.  This is discussed
in Section \ref{SectionScaling}.  In the single parameter case ($\nu=1$)
this scale invariance comes ``for free.''  However, in the multi-parameter
case ($\nu>1$) this scale invariance is an essential point.
\end{rmk}

\begin{rmk}
The algebraic condition excludes examples like
\begin{equation*}
f\mapsto \psi\q(x\w) \int f\q(x-st\w)K\q(s,t\w)\: ds\:dt,
\end{equation*}
where $K\q(s,t\w)$ is a product kernel corresponding
to $\q(s,t\w)\in \R^2=\R\times \R$ (here, $x\in \R$).  It is well known
that there exist product kernels such that the above
operator is not bounded on $L^2$.  This dates
back to \cite{NagelWaingerL2BoundednessOfHilbertTransformsMultiParameterGroup}.
See \cite{StreetMultiParameterSingRadonLt} for a further
discussion.
\end{rmk}

\begin{rmk}
Some other work that overlaps with our results is in the recent work
``Singular integrals with flag kernels on graded groups: I''
by Nagel, Ricci, Stein, and Wainger.
Also, earlier work by Carbery, Wright, and Wainger
\cite{CarberyWaingerWrightDoubleHilbertTransformsAlongPolySurfInRRR,
CarberyWaingerWrightSingularIntegralsAndTheNewtonDiagram,
CarberyWaingerWrightTripleHilbertTransformsAlongPolySurfInRRRR}.
Some other related works include 
\cite{ChoHongKimYangTripleHilbertTransformsAlongPolynomialSurfaces,RicciSteinMultiparameterSingularIntegralsAndMaximalFunctions}.
\end{rmk}

\section{When $\gamma$ is real analytic}\label{SectionRealAnal}
The third paper in the series \cite{StreetMultiParameterSingRadonAnal}
deals with the special case when $\gamma$ is assumed to be real
analytic (in {\it both} variables). 
Once again, we assume in this note that $\gamma$
is given by \eqref{EqnGammaFiniteExp}.
%, though this assumption
%is not essential (see Section ??).  
In this case
assuming that $\gamma$ is real analytic is the same
as assuming that each $X_\alpha$ is real analytic.
The essential point in this situation is the following proposition.
\begin{prop}\label{PropGammaRealAnalFiniteType}
When $\gamma$ is real analytic (equivalently when each $X_\alpha$
is real analytic), the finite type condition holds automatically.
\end{prop}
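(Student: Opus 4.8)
\emph{Proposed proof.} The plan is to reduce the finite-type condition to a single feature of real-analyticity, namely that the local ring of real-analytic germs is Noetherian, and to encode the $\N^\nu$-valued degrees inside an auxiliary polynomial ring so that the Hilbert basis theorem applies and the coordinatewise bound $e\le d$ in \eqref{EqnFiniteTypeControl} comes out automatically.

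First I would set up the algebra. Write $R=\mathcal{O}_0$ for the ring of germs at the origin of real-analytic functions on $\R^n$, which is Noetherian (it is isomorphic to the ring of convergent power series in $n$ variables). Every vector field occurring in a pair of $\sS$ is built from the real-analytic vector fields $X_\alpha$ indexed by pure powers by taking iterated Lie brackets, hence is itself real-analytic and defines a germ at the origin; after fixing coordinates, the module of germs of real-analytic vector fields at $0$ is free of rank $n$ over $R$, and I identify it with $R^n$. Introduce $\nu$ formal indeterminates $s_1,\dots,s_\nu$; by $\nu$ applications of the Hilbert basis theorem, $R\q[s_1,\dots,s_\nu\w]$ is Noetherian, so the free module $R\q[s_1,\dots,s_\nu\w]^n$ is Noetherian and each of its submodules is finitely generated. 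To a pair $\q(X,d\w)\in\sS$ with $d=\q(d_1,\dots,d_\nu\w)$ associate the $s$-homogeneous element $s^dX:=s_1^{d_1}\cdots s_\nu^{d_\nu}X$ of $R\q[s_1,\dots,s_\nu\w]^n$, and let $M$ be the submodule generated by all of these. Since $M$ is finitely generated, collecting the finitely many pairs that appear in some finite generating set, one may take as generators the elements $s^eY$ over a \emph{finite} subset $\sF\subseteq\sS$; this is the set claimed in the finite-type condition.

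Next I would extract the required expression. Fix $\q(X,d\w)\in\sS$. From $s^dX\in M$, write $s^dX=\sum_{\q(Y,e\w)\in\sF}p_Y(s)\,s^eY$ with $p_Y\in R\q[s_1,\dots,s_\nu\w]$. Since $M$ is generated by $s$-homogeneous elements it is a graded submodule, so I may pass to the $s$-degree-$d$ homogeneous part of this identity in the free $\N^\nu$-graded module $R\q[s_1,\dots,s_\nu\w]^n$: the left side is already homogeneous of degree $d$, while the degree-$d$ part of $p_Y(s)\,s^eY$ equals $a_{Y,\,d-e}\,s^dY$ when $e\le d$ holds coordinatewise and vanishes otherwise, where $a_{Y,\,d-e}\in R$ is the coefficient of $s^{d-e}$ in $p_Y$. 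Cancelling the common nonzero factor $s^d$ (legitimate, as $R\q[s_1,\dots,s_\nu\w]^n$ is free over the integral domain $R\q[s_1,\dots,s_\nu\w]$) gives $X=\sum_{\q(Y,e\w)\in\sF,\; e\le d}a_{Y,\,d-e}\,Y$ with $a_{Y,\,d-e}\in R=\mathcal{O}_0$, which is exactly the form \eqref{EqnFiniteTypeControl}, with coefficients that are real-analytic and in particular $C^\infty$. The delicate feature of the statement, the coordinatewise bound $e\le d$, drops out of the polynomial grading; this is the whole reason for introducing the variables $s_\mu$.

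Finally, the identity above is one of germs; to obtain it on a single fixed neighborhood of the origin I would realize the finitely many generators $s^eY$ together with a finite generating set of the (finitely generated, by Noetherianity) module of syzygies on a common neighborhood $U$, and invoke coherence of the sheaf of real-analytic functions (Oka's theorem) to conclude that the membership $s^dX\in M$ persists on $U$ uniformly in $\q(X,d\w)$; since all the later analysis is carried out in a shrinking neighborhood of the origin anyway, nothing is lost. I expect the main obstacle to be conceptual rather than computational: recognizing that Noetherianity is precisely what real-analyticity contributes, and that the $\N^\nu$-grading of degrees should be pushed into a Noetherian polynomial ring so that the bound $e\le d$ is free. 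Consistently, the argument genuinely breaks down in the merely $C^\infty$ category, where the germ ring is not Noetherian --- which matches the smooth counterexample to the finite-type condition recorded in the preceding remark.
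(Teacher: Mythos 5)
Your proposal is correct and consistent with the approach the paper indicates: the one-line hint (Weierstrass preparation theorem) is precisely what yields Noetherianity of the ring of real-analytic germs at $0$, and your auxiliary-indeterminate device --- encoding each $\q(X,d\w)\in\sS$ as the $s$-homogeneous element $s^dX$ in $R\q[s_1,\dots,s_\nu\w]^n$ so that a finite generating set $\sF$ and the coordinatewise bound $e\le d$ both drop out of the grading --- is the natural way to extract the multi-parameter finite-type condition from Noetherianity. The one step to treat with a bit more care is the passage from germs to a fixed neighborhood: coherence gives that each individual membership $s^dX\in M$ holds on \emph{some} neighborhood of $0$, but one should still justify that a single neighborhood works for the infinitely many $\q(X,d\w)\in\sS$ at once; since the paper works locally near $0$ throughout and defers the full proof to \cite{StreetMultiParameterSingRadonAnal}, this is bookkeeping rather than a genuine gap.
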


Proposition \ref{PropGammaRealAnalFiniteType} is a consequence
of the Weierstrass preparation theorem.\footnote{This is closely
related to the fact that the involutive distribution generated
by a finite collection of real analytic vector fields is automatically
locally finitely generated as a $C^\infty$ module and the Frobenius theorem
applies.  This dates back to \cite{NaganoLinearDifferentialSystemsWithSingularities, LobryControlabiliteDesSystemesNonLinearies}.}
We refer
the reader to \cite{StreetMultiParameterSingRadonAnal}
for details.
%We defer the discussion of Proposition \ref{PropGammaRealAnalFiniteType}
%to Section ??

\begin{cor}\label{CorTBoundedRealAnal}
If each $X_\alpha$ is real analytic, and the algebraic condition holds,
then $T$ and $\sM$ are bounded on $L^p$ ($1<p<\infty$).
\end{cor}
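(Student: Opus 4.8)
The plan is to deduce the corollary immediately by combining the two results already in hand. First I would invoke Proposition \ref{PropGammaRealAnalFiniteType}: since each $X_\alpha$ is assumed real analytic, the finite-type condition \eqref{EqnFiniteTypeControl} holds automatically. Consequently, under the hypotheses of the corollary, both of the assumptions on the vector fields $X_\alpha$ required in Theorem \ref{ThmMainThm} — the finite-type condition and the algebraic condition — are satisfied. Applying Theorem \ref{ThmMainThm} then gives that $T$ (for any product kernel $K\q(t_1,\ldots,t_\nu\w)$ with sufficiently small support) and the maximal operator $\sM$ are bounded on $L^p$ for $1<p<\infty$, which is exactly the assertion.

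There is essentially no obstacle at this stage, since the analytic and combinatorial content has been packaged into Theorem \ref{ThmMainThm} and Proposition \ref{PropGammaRealAnalFiniteType}; the corollary is purely a matter of checking that the two hypotheses of the theorem are met. The only point worth emphasizing is that the algebraic condition genuinely cannot be discarded in the real-analytic setting: Proposition \ref{PropGammaRealAnalFiniteType} supplies the finite-type condition for free but says nothing about \eqref{EqnAlgControl}, and indeed the map \eqref{EqnExNoAlg} is real analytic yet violates the algebraic condition, with an associated $T$ that fails to be $L^2$ bounded for a suitable product kernel. Thus the hypothesis retained in the corollary is the best one obtainable along these lines, and the proof concludes by citing the two stated results.
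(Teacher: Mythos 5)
Your proof is correct and matches the paper's: both deduce the corollary by invoking Proposition \ref{PropGammaRealAnalFiniteType} to obtain the finite-type condition and then applying Theorem \ref{ThmMainThm}. The added remark on the necessity of retaining the algebraic hypothesis is accurate but not part of the proof itself.
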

\begin{proof} 
This follows directly from combining Theorem \ref{ThmMainThm} and Proposition \ref{PropGammaRealAnalFiniteType}.
\end{proof}

\begin{cor}\label{CorRealAnalnu1}
If $\nu=1$ (i.e., when $K$ is a Calder\'on-Zygmund kernel) and if each $X_\alpha$ is real analytic, then
$T$ and $\sM$ are bounded on $L^p$ ($1<p<\infty$).
\end{cor}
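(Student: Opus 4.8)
The plan is to recognize that Corollary~\ref{CorRealAnalnu1} is simply the case $\nu=1$ of Corollary~\ref{CorTBoundedRealAnal}, provided one checks that its algebraic hypothesis is automatically satisfied when $\nu=1$. Thus I would argue in three short steps. First, I would record that when $\nu=1$ the decomposition $\R^N=\R^{N_1}$ has only one factor, so for any multi-index $\alpha\in\N^N$ the formal degree $\deg\q(\alpha\w)=\q|\alpha\w|\in\N$ is a single nonnegative integer, which is nonzero in precisely one component whenever it is nonzero at all. Hence every vector field $X_\alpha$ occurring in \eqref{EqnGammaFiniteExp} with $0<\q|\alpha\w|\leq L$ is a pure power, so $\sN=\emptyset$ and the algebraic condition \eqref{EqnAlgControl}, being a statement quantified over $\q(X,d\w)\in\sN$, holds vacuously.

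Second, I would invoke Proposition~\ref{PropGammaRealAnalFiniteType}: since each $X_\alpha$ is real analytic, $\gamma$ as given by \eqref{EqnGammaFiniteExp} is real analytic, and therefore the finite-type condition holds automatically. This is the step that uses genuine content, namely the Weierstrass preparation argument of \cite{StreetMultiParameterSingRadonAnal} underlying Proposition~\ref{PropGammaRealAnalFiniteType}.

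Third, with both the finite-type condition and the (vacuous) algebraic condition now in hand, Theorem~\ref{ThmMainThm}---equivalently Corollary~\ref{CorTBoundedRealAnal}---immediately gives that $T$ (for every Calder\'on-Zygmund kernel $K$ with sufficiently small support) and $\sM$ are bounded on $L^p$ for $1<p<\infty$. I do not expect any real obstacle here: the only point that deserves care is the bookkeeping in the first step, that the notion of ``pure power'' trivializes when $\nu=1$ so that the algebraic hypothesis is automatically met; the entire analytic difficulty is already packaged inside Theorem~\ref{ThmMainThm} and Proposition~\ref{PropGammaRealAnalFiniteType}.
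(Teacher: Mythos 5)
Your proposal is correct and follows the paper's own argument exactly: reduce to Corollary~\ref{CorTBoundedRealAnal} by noting that when $\nu=1$ every multi-index is a pure power, so $\sN=\emptyset$ and the algebraic condition holds vacuously, and then combine with Proposition~\ref{PropGammaRealAnalFiniteType}. The only difference is that you spell out explicitly why the algebraic condition trivializes, which the paper states without elaboration.
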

\begin{proof}
This follows from Corollary \ref{CorTBoundedRealAnal} since the
algebraic condition is vacuous when $\nu=1$.
\end{proof}

Actually, more is true for the maximal function.  Indeed, we have the following
result.
\begin{thm}\label{ThmRealAnalMax}
For any $\nu>0$, if each $X_\alpha$ is real analytic, then $\sM$ is bounded on $L^p$ ($1<p\leq \infty$).
\end{thm}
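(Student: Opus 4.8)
To prove Theorem \ref{ThmRealAnalMax} the plan is to exploit that $\sM$ — unlike the singular integral $T$ — is a \emph{positive} operator, so that no cancellation, and hence no substitute for the algebraic condition, is needed: only the finite-type condition is used, and by Proposition \ref{PropGammaRealAnalFiniteType} it holds automatically. The endpoint $p=\infty$ is trivial, since $\sM f\q(x\w)$ is bounded by $\q\|\psi\w\|_\infty$ times the integral of $\q|f\w|$ against a pushforward measure of uniformly bounded total mass, whence $\q\|\sM f\w\|_{L^\infty}\lesssim\q\|f\w\|_{L^\infty}$. It remains to treat $1<p<\infty$. Writing $\delta t=\q(\delta_1 t_1,\ldots,\delta_\nu t_\nu\w)$ and $\delta^{\deg\q(\alpha\w)}=\prod_\mu\delta_\mu^{\q|\alpha_\mu\w|}$, so that $\gamma_{\delta t}\q(x\w)=\exp\q(\sum_{0<\q|\alpha\w|\leq L}\delta^{\deg\q(\alpha\w)}t^\alpha X_\alpha\w)x$, the idea is to dominate $\sM$ pointwise by the geometric maximal operator
\[
\sM f\q(x\w)\ \lesssim\ \psi\q(x\w)\,\sup_{0<\delta\ll 1}\ \frac{1}{\Vol{B\q(x,\delta\w)}}\int_{B\q(x,\delta\w)}\q|f\q(y\w)\w|\:dy,
\]
where $\q\{B\q(x,\delta\w)\w\}_{0<\delta\ll1}$ is the family of Carnot--Carath\'eodory-type balls of multi-radius $\delta$ attached to the \emph{entire} collection of vector fields with their formal degrees, $\sP\cup\sN$ — pure powers \emph{and} non-pure powers together.

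Three ingredients are needed. (i) \emph{Finite type for $\sP\cup\sN$.} Because every $X_\alpha$ is real analytic, the family obtained by adjoining to $\sP\cup\sN$ all iterated Lie brackets (with the corresponding sums of degrees) is, by the Weierstrass-preparation argument behind Proposition \ref{PropGammaRealAnalFiniteType} applied to this larger but still finite collection of real-analytic vector fields, finitely generated in the sense of \eqref{EqnFiniteTypeControl}. In the singular-integral case this is exactly where the algebraic condition enters, to keep the non-pure powers within the geometry generated by the pure powers; for the maximal function one may instead simply \emph{enlarge} the geometry, and real analyticity guarantees the enlargement is still of finite type. Consequently the $B\q(x,\delta\w)$ are doubling in $\delta$ with constants uniform in $x$ and, being comparable to the associated ``boxes,'' enjoy a multi-parameter Vitali-type covering property. (ii) \emph{Containment.} Gronwall/Chow-type estimates for the flow of $\sum_{0<\q|\alpha\w|\leq L}\delta^{\deg\q(\alpha\w)}t^\alpha X_\alpha$ give $\gamma_{\delta t}\q(x\w)\in B\q(x,C\delta\w)$ for all $\q|t\w|\leq 1$. (iii) \emph{A Jacobian estimate.} The pushforward of Lebesgue measure on $\q\{\q|t\w|\leq 1\w\}$ under $t\mapsto\gamma_{\delta t}\q(x\w)$ is at most $C\,\Vol{B\q(x,\delta\w)}^{-1}$ times Lebesgue measure on $B\q(x,C\delta\w)$, uniformly in $x$ and $\delta$. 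Granting (i)--(iii) and absorbing the dilation constant into the supremum, $\sM$ is dominated by the geometric maximal operator above, which — via the doubling and covering properties of the $B\q(x,\delta\w)$, e.g.\ by reducing through the multi-parameter ball--box structure to a composition of single-parameter maximal operators of the kind handled by Corollary \ref{CorRealAnalnu1} — is bounded on $L^p$ for $1<p<\infty$. With the trivial case $p=\infty$, this proves the theorem.

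The hard part is ingredient (iii): the uniform Jacobian/non-degeneracy estimate for $t\mapsto\gamma_{\delta t}\q(x\w)$. It is here that real analyticity is indispensable and that the finite-type condition alone does not suffice in the multi-parameter setting — a gap reflected by the failure of the algebraic condition in examples such as \eqref{EqnExNoAlg}. I would prove it using the same circle of ideas as Proposition \ref{PropGammaRealAnalFiniteType}: Weierstrass preparation, and the {\L}ojasiewicz-type lower bounds it yields, give quantitative control of the zero sets of the relevant real-analytic determinants; after a change of variables adapted to the multi-scaling by $\delta$, this yields the stated comparison of measures. Full details are in \cite{StreetMultiParameterSingRadonAnal}.
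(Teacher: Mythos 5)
Your approach is genuinely different from the paper's, and it has a real gap. The paper's proof uses an idea of Christ: it dominates $\sM$ by the \emph{stronger} maximal operator
\begin{equation*}
\sMt f\q(x\w) = \psi\q(x\w) \sup_{k_\alpha\in \N}\int_{\q|t\w|\leq a}
 \q|f\q(\exp\q\{\sum_{0<\q|\alpha\w|\leq L} 2^{-k_\alpha} t^{\alpha} X_\alpha\w\}x\w)\w| \: dt,
\end{equation*}
in which \emph{each} $X_\alpha$ receives its own independent scaling parameter $k_\alpha$ (one recovers $\sM$ by specializing $k_\alpha=j_1\q|\alpha_1\w|+\cdots+j_\nu\q|\alpha_\nu\w|$). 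In this enlarged-parameter picture every $X_\alpha$ is effectively a pure power, so the algebraic condition is vacuously true; the finite-type condition holds by Proposition \ref{PropGammaRealAnalFiniteType}; and the existing machinery behind Theorem \ref{ThmMainThm} (the Frobenius scaling map, almost orthogonality, square functions) is run verbatim on $\sMt$. No new geometric or measure-theoretic estimate is needed.

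Your proposal instead tries to dominate $\sM$ pointwise by a geometric maximal operator over Carnot--Carath\'eodory balls attached to $\sP\cup\sN$, with ingredient (iii) a uniform Jacobian/pushforward bound $\q(\gamma_{\delta\cdot}\w)_{*}\q(dt|_{\q\{\q|t\w|\leq 1\w\}}\w)\lesssim \Vol{B\q(x,\delta\w)}^{-1}\,dy$. That estimate is not correct in the generality claimed. Already for the explicit example \eqref{EqnExNoAlg}, $\gamma_{\delta t}\q(x\w)=x+\delta_1^3 t_1^3+\delta_1\delta_2 t_1 t_2+\delta_2^3 t_2^3$, the pushforward density develops a logarithmic singularity (from the critical point of $t\mapsto t_1t_2$ at the origin), so it is not bounded by $\Vol{B\q(x,\delta\w)}^{-1}$ pointwise. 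More fundamentally, even granting some averaged version of (iii), the $L^p$-boundedness of a multi-parameter geometric maximal operator over CC balls does not follow from ``doubling plus a Vitali-type covering'': multi-parameter maximal operators over general bases of generalized rectangles are not automatically bounded, and the iteration-to-single-parameter argument you gesture at requires exactly the kind of product/box factorization that, for $\sP\cup\sN$, is what the algebraic condition was designed to guarantee. When that condition fails --- as it does in \eqref{EqnExNoAlg} --- there is no obvious factorization, and the reduction to Corollary \ref{CorRealAnalnu1} is not available. This is the very obstruction Christ's enlargement of the parameter set is designed to bypass, and it is the step your proposal is missing.
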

\begin{proof}[Proof idea]
Consider,
\begin{equation*}
\sM f\q(x\w)  \lesssim \psi\q(x\w) \sup_{j_1,\ldots, j_\nu\in \N}
\int_{\q|t\w|\leq a}
 \q|f\q(\exp\q\{\sum_{0<\q|\alpha\w|\leq L} 2^{-j_1\q|\alpha_1\w|}\cdots 2^{-j_\nu\q|\alpha_\nu\w|} t^{\alpha} X_\alpha\w\}x\w)\w| \: dt,
\end{equation*}
where $a>0$ is some small number (depending on the $X_\alpha$).
Following an idea of Christ 
\cite{ChristTheStrongMaximalFunctionOnANilpotentGroup}, 
we study the stronger maximal operator
\begin{equation*}
\sMt f\q(x\w) = \psi\q(x\w) 
\sup_{k_\alpha\in \N}
\int_{\q|t\w|\leq a}
 \q|f\q(\exp\q\{\sum_{0<\q|\alpha\w|\leq L} 2^{-k_\alpha} t^{\alpha} X_\alpha\w\}x\w)\w| \: dt.
\end{equation*}
It is easy to see that $\sM f\q(x\w)\lesssim \sMt f\q(x\w)$, since
one may take $k_\alpha=j_1\q|\alpha_1\w|+\cdots+j_\nu\q|\alpha_\nu\w|$.  Furthermore,
because of the form of $\sMt$, each $X_\alpha$ behaves like
a vector field corresponding to a pure power.  
Thus, the algebraic condition holds automatically.
Since the finite type condition holds by Proposition \ref{PropGammaRealAnalFiniteType},
the proof of Theorem \ref{ThmMainThm} (with minor modifications) goes through to prove
that $\sMt$ is bounded on $L^p$.
\end{proof}

%\subsection{More general $\gamma$}
The above results generalize to the case when $\gamma$ is merely
assumed to be a germ of a real analytic function
satisfying $\gamma_0\q(x\w)\equiv x$.  I.e., $\gamma$
not necessarily of the form
\eqref{EqnGammaFiniteExp}.
%In Section ?? we will discuss the generalizations of Proposition \ref{PropGammaRealAnalFiniteType}
%and Corollary \ref{CorTBoundedRealAnal}.
We take a moment, here, to discuss the the generalizations
of Corollary \ref{CorRealAnalnu1} and Theorem \ref{ThmRealAnalMax}.
Proposition \ref{PropGammaRealAnalFiniteType} and Corollary \ref{CorTBoundedRealAnal} can also be generalized, but we defer a discussion of this
to \cite{StreetMultiParameterSingRadonAnal}.

\begin{thm}\label{ThmGenRealAnalT}
Suppose $\nu=1$ (i.e., $K$ is a Calder\'on-Zygmund kernel)
and $\gamma$ is a real analytic function with $\gamma_0\q(x\w)\equiv x$.
Then the operator $T$ given by \eqref{EqnIntroT} is bounded on 
$L^p$ ($1<p<\infty$).
\end{thm}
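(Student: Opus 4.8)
The plan is to reduce the general real analytic $\gamma$ with $\gamma_0(x) \equiv x$ to the case already covered by Corollary \ref{CorRealAnalnu1}, i.e.\ to a $\gamma$ of the exponential form \eqref{EqnGammaFiniteExp} with real analytic $X_\alpha$. First I would use the fact that a germ of a real analytic map $\gamma:\R^1_0 \times \R^n_0 \to \R^n$ with $\gamma_0(x) \equiv x$ can be written, after possibly shrinking the neighborhood, in the form $\gamma_t(x) = \exp\q(\sum_{0<\q|\alpha\w|\le L} t^\alpha X_\alpha + R_L(t,x)\w)x$ where the $X_\alpha$ are real analytic vector fields (the Taylor coefficients of $\log$ of the flow map in the $t$-variable) and $R_L$ is a real analytic vector field vanishing to order $L+1$ in $t$; more precisely one writes $\gamma_t = \exp(W(t,\cdot))$ for a real analytic $t$-dependent vector field $W(t,x) = \sum_\alpha t^\alpha X_\alpha$ with $W(0,\cdot)=0$, which is legitimate since $\gamma_t$ is a real analytic curve of diffeomorphisms fixing the base point to first order. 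Here $\nu=1$ so the product kernel $K$ is simply a Calder\'on--Zygmund kernel on $\R^1$.

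\medskip

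\noindent The second step is to observe that for $\nu=1$ the finite-type condition is exactly H\"ormander's condition (as noted in the remark following Theorem \ref{ThmMainThm}), and to record that the Lie algebra generated by the full real analytic family $\{X_\alpha\}$ (now an infinite family, or the family of all Taylor coefficient vector fields of $W$) is, by the real analytic version of Proposition \ref{PropGammaRealAnalFiniteType} and the Nagano--Lobry result cited there, locally finitely generated as a $C^\infty$ module with the appropriate control on formal degrees. The point is that real analyticity forces the curvature condition to be of finite type, so the hypotheses of the general $\nu=1$ theory — i.e.\ the full-strength version of Theorem \ref{ThmMainThm} appearing in \cite{SteinStreetMultiParameterSingRadonLp} where $\gamma$ need not be of the form \eqref{EqnGammaFiniteExp} and the degree sums are replaced by the more general conditions — are satisfied automatically. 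Concretely, one invokes the more general version of the main theorem quoted in the first remark, which allows general $\gamma$; with $\nu=1$ the algebraic condition is vacuous and the finite-type condition holds by the real analytic argument, so $L^p$ boundedness for $1<p<\infty$ follows.

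\medskip

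\noindent I expect the main obstacle to be justifying that the general-$\gamma$ formulation of the main theorem applies, i.e.\ verifying that the appropriate scale-invariant finite-type condition (the one phrased in terms of the curvature data of $\gamma$ rather than a literal finite sum of iterated brackets) is a genuine consequence of real analyticity. This is precisely the content that \cite{StreetMultiParameterSingRadonAnal} establishes via Weierstrass preparation applied to the ideal generated by the Taylor coefficients of the relevant bracket expressions, uniformly across dyadic scales; the delicate part is the uniformity in the scaling parameter, which is where the real analytic hypothesis (as opposed to mere $C^\infty$) is indispensable, and where one must control how the generators and the $C^\infty$ structure constants $c_X^Y$ depend on the dilation. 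Granting that reduction, the $L^p$ estimate itself is then the already-established singular Radon transform bound in the finite-type setting, so no further analytic work is needed beyond citing Corollary \ref{CorRealAnalnu1} in its general-$\gamma$ incarnation.
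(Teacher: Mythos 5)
Your high-level plan---reduce to the finite-type condition via real analyticity and Weierstrass preparation, then invoke the general-$\gamma$ form of the main theorem---is the right shape, and the paper itself only sketches this, deferring the detailed proof to \cite{StreetMultiParameterSingRadonAnal}. But your first step contains a real error that the paper explicitly warns against. You claim that any real analytic $\gamma$ with $\gamma_0(x)\equiv x$ can (after shrinking) be written as $\gamma_t(x)=\exp(W(t,\cdot))x$ for a smooth $t$-dependent vector field $W$, citing ``$\gamma_t$ is a real analytic curve of diffeomorphisms'' as justification. Section~\ref{SectionGeneral} says the opposite: ``If $\gamma$ were of the form $\gamma_t(x)=\exp(A(t))x$\ldots the discussion in this note would be easy to generalize. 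Unfortunately, not every $\gamma$ is of this form.'' The actual workaround is to \emph{replace} the nonexistent exponent by the vector field $W$ defined intrinsically via
\begin{equation*}
W(t,x)=\frac{d}{d\epsilon}\bigg|_{\epsilon=1}\gamma_{\epsilon t}\circ\gamma_t^{-1}(x),
\end{equation*}
which exists for any $\gamma$, satisfies $W(0)\equiv 0$, and stands in for $A$ ``everywhere in this note where one might wish to use $A$.'' You appear to conflate this $W$ with the hypothetical exponent; they are not the same object, and the whole point of \eqref{EqnDefnW} is that the exponent may not exist. One then Taylor-expands \emph{this} $W$ to extract the $X_\alpha$, and the rest of your outline (real analyticity $\Rightarrow$ finite-type via Weierstrass preparation, algebraic condition vacuous for $\nu=1$, apply the general theorem) goes through as the paper indicates.

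One smaller inaccuracy: you say that for $\nu=1$ the finite-type condition ``is exactly H\"ormander's condition.'' The paper only asserts the one-way implication (H\"ormander's condition implies finite type), and the remark at the end of Section~\ref{SectionGeneral} makes clear that the finite-type hypothesis is strictly weaker---it allows the distribution generated by the $X_\alpha$ to be a proper foliation provided it is finitely generated as a $C^\infty$-module and $W$ is tangent to the leaves in a scale-invariant way. This distinction is precisely what gives Theorem~\ref{ThmGenRealAnalT} its scope: a real analytic $\gamma$ need not satisfy H\"ormander's condition, but real analyticity forces finite generation of the distribution uniformly in scale.
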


\begin{thm}\label{ThmGenRealAnalM}
Suppose $\gamma$ is a real analytic function
satisfying $\gamma_0\q(x\w)\equiv x$.  Then the operator $\sM$
given by \eqref{EqnIntroM} (for any $\nu$) is bounded on $L^p$ ($1<p\leq \infty$).
\end{thm}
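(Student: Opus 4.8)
The endpoint $p=\infty$ is immediate: for almost every $x$ one has $\q|\sM f\q(x\w)\w|\leq \q\|\psi\w\|_{L^\infty}\q\|f\w\|_{L^\infty}\Vol{\q\{\q|t\w|\leq 1\w\}}$. So we concentrate on $1<p<\infty$, and the plan is to reduce to the situation already treated in the proof of Theorem \ref{ThmRealAnalMax}. Since $\gamma$ is a real analytic germ with $\gamma_0\q(x\w)\equiv x$, the family $t\mapsto\gamma_t$ is a real analytic family of germs of diffeomorphisms starting at the identity, and hence admits a real analytic logarithm; concretely, one may write
\begin{equation*}
\gamma_t\q(x\w)=\exp\q(\sum_{0<\q|\alpha\w|}t^\alpha X_\alpha\q(x\w)\w)x,
\end{equation*}
where the $X_\alpha$ are real analytic vector fields and the sum is now a convergent power series in $t$ rather than the finite sum of \eqref{EqnGammaFiniteExp}. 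We assign to each $X_\alpha$ the degree $\deg\q(\alpha\w)\in\N^\nu$ exactly as before, and we form $\sP$, $\sN$, and $\sS$ accordingly.

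Carrying out the dyadic decomposition of the parameters $\delta_1,\ldots,\delta_\nu$ as in the proof of Theorem \ref{ThmRealAnalMax}, and then, following Christ \cite{ChristTheStrongMaximalFunctionOnANilpotentGroup}, passing to an independent dilation parameter for each monomial $t^\alpha$, one obtains the pointwise bound $\sM f\q(x\w)\lesssim\sMt f\q(x\w)$, where
\begin{equation*}
\sMt f\q(x\w)=\psi\q(x\w)\sup_{k_\alpha\in\N}\int_{\q|t\w|\leq a}\q|f\q(\exp\q\{\sum_{0<\q|\alpha\w|}2^{-k_\alpha}t^\alpha X_\alpha\w\}x\w)\w|\:dt
\end{equation*}
for a suitably small $a>0$; the domination follows on taking $k_\alpha=j_1\q|\alpha_1\w|+\cdots+j_\nu\q|\alpha_\nu\w|$. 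As in the proof of Theorem \ref{ThmRealAnalMax}, the effect of this reduction is that in $\sMt$ every $X_\alpha$ behaves like a vector field attached to a pure power, so the algebraic condition is automatic for $\sMt$; note also that this step is uniform in $\nu$.

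It then remains to verify the finite-type condition for the family $\q\{\q(X_\alpha,\deg\q(\alpha\w)\w)\w\}$, after which the proof of Theorem \ref{ThmMainThm} applies, with minor modifications, to $\sMt$. The finite-type condition holds automatically: this is the germ version of Proposition \ref{PropGammaRealAnalFiniteType}, and is likewise a consequence of the Weierstrass preparation theorem (equivalently, of the Noetherian property of the ring of convergent power series). Although $\log\gamma$ now involves infinitely many vector fields, the $C^\infty$ module generated by all their iterated Lie brackets is locally finitely generated, and one may select a finite $\sF\subseteq\sS$ so that \eqref{EqnFiniteTypeControl} holds with the multi-degree constraint $e\leq d$ respected; we refer to \cite{StreetMultiParameterSingRadonAnal} for this. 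Granting it, the argument proving Theorem \ref{ThmMainThm} gives the $L^p$ boundedness of $\sMt$, and hence of $\sM$, for $1<p<\infty$.

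The genuinely new point --- and the step I expect to be the main obstacle --- is that $\gamma$ is no longer a finite exponential sum: the expansion of $\log\gamma$ is infinite, so the dyadic summations underlying the proof of Theorem \ref{ThmMainThm} must be shown to converge. This is done quantitatively using the radius of convergence of $\gamma$. The $X_\alpha$ satisfy Cauchy-type estimates $\q|\partial_x^\beta X_\alpha\q(x\w)\w|\lesssim_\beta C^{\q|\alpha\w|}$ uniformly for $x$ near $0$, and together with the dilation structure built into $\sMt$ these show that, at each scale, the contribution of the high-order terms (those with $\q|\alpha\w|$ large) is dominated by a convergent geometric series whose leading terms are precisely the finitely many vector fields already controlled by the finite-type condition. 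Once this convergence has been established, the remaining estimates proceed as in the proof of Theorem \ref{ThmMainThm}; see \cite{StreetMultiParameterSingRadonAnal,StreetMultiParameterSingRadonLt}.
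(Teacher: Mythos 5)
There is a genuine gap in your reduction. You assume that a real analytic germ $\gamma_t$ with $\gamma_0(x)\equiv x$ always admits a representation $\gamma_t(x)=\exp\left(\sum_{\alpha}t^\alpha X_\alpha\right)x$ with real analytic vector fields $X_\alpha$; in other words, that $\gamma$ admits a real analytic logarithm $A(t)=\sum_\alpha t^\alpha X_\alpha$. The paper explicitly warns against this in Section \ref{SectionGeneral}: ``If $\gamma$ were of the form $\gamma_t(x)=\exp(A(t))x$ \ldots the discussion in this note would be easy to generalize. Unfortunately, not every $\gamma$ is of this form.'' The authors introduce the $t$-dependent vector field $W(t)$ in \eqref{EqnDefnW} precisely to substitute for the nonexistent logarithm; $W$ always exists and the map $\gamma\mapsto W$ is a bijection, unlike $\gamma\mapsto A$. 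Your entire argument --- forming $\sP$, $\sN$, $\sS$ from the hypothetical $X_\alpha$'s, passing to $\sMt$ with independent $k_\alpha$'s, invoking the finite-type condition --- is built on this assumption, so the reduction does not get off the ground in general. You should instead Taylor-expand $W(t)\sim\sum_{\alpha}t^\alpha X_\alpha$ and re-run the whole scheme with $W$ in place of $A$, as the paper does in Section \ref{SectionGeneral}.

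Even granting the exponential form, there is a second missing ingredient. The paper states that ``the proof of Theorem \ref{ThmGenRealAnalM} incorporates ideas both from \cite{ChristTheStrongMaximalFunctionOnANilpotentGroup} and \cite{BourgainARemarkOnTheMaximalFunctionAssociatedToAnAnalyticVectorField}.'' Your proof uses Christ's idea (passing to $\sMt$ with an independent dilation parameter per monomial) but makes no use of Bourgain's. The obstacle you flag --- that $\log\gamma$ (or rather the Taylor series of $W$) now involves infinitely many $X_\alpha$ and hence infinitely many independent parameters $k_\alpha$ in $\sMt$ --- is exactly where the authors indicate Bourgain's quantitative use of analyticity is needed, and your one-paragraph appeal to Cauchy estimates plus a ``convergent geometric series'' does not substitute for it. In particular, with infinitely many $\alpha$'s the supremum defining $\sMt$ ranges over a strictly larger index set, and it is not clear that $\sMt$ remains bounded; nothing in the proof of Theorem \ref{ThmMainThm} (which is stated for a finite exponential sum) or in Proposition \ref{PropGammaRealAnalFiniteType} addresses this. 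So the two steps you would most need to justify --- the existence of the logarithm and the passage from a finite to an infinite expansion --- are precisely the two the paper identifies as requiring further ideas, and neither is supplied.
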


Theorem \ref{ThmGenRealAnalM} generalizes some results from the literature.
A well known result of Bourgain 
\cite{BourgainARemarkOnTheMaximalFunctionAssociatedToAnAnalyticVectorField}
deals with the special case of Theorem \ref{ThmGenRealAnalM}
when $\nu=1$, $p=2$, and $\gamma_t\q(x\w) = x+tv\q(x\w)$, where
$x\in \R^2$, $t\in \R$, and $v$ is a real analytic vector field on $\R^2$--Theorem
\ref{ThmGenRealAnalM} extends this to $1<p\leq \infty$ and
allows $x\in \R^n$ for any $n$ (instead of only $n=2$).
Also, a result of Christ \cite{ChristTheStrongMaximalFunctionOnANilpotentGroup}
deals with a special case of 
%Theorem \ref{ThmGenRealAnalM} 
Theorem \ref{ThmRealAnalMax}
where each $X_\alpha$ is
a left invariant vector field on a nilpotent Lie group (and therefore real analytic).
In fact, the proof of Theorem \ref{ThmGenRealAnalM} incorporates
ideas both from 
%the result of Christ 
\cite{ChristTheStrongMaximalFunctionOnANilpotentGroup}
and 
%the result of Bourgain 
\cite{BourgainARemarkOnTheMaximalFunctionAssociatedToAnAnalyticVectorField}.
We refer the reader to these references 
and \cite{StreetMultiParameterSingRadonAnal}
for further details on these issues.

\section{Product Kernels}\label{SectionProductKernel}
In this section, we discuss the notion of a product kernel.
Our main reference for the following is 
\cite{NagelRicciSteinSingularIntegralsWithFlagKernels}
and we refer the reader there for further information.

We fix a decomposition $\R^N=\R^{N_1}\times \cdots \times \R^{N_\nu}$
as in the introduction.  Our goal is to define the notion of a product
kernel $K\q(t_1,\ldots, t_\nu\w)$, relative to this decomposition.

\begin{defn}
A $k$-normalized bump function on $\R^N$ is a $C^k$ function
supported on the unit ball with $C^k$ norm bounded by $1$.
\end{defn}

The definitions that follow turn out to be independent of the
choice of $k\geq 1$, and we therefore refer to normalized bump functions (which
can be taken to mean $1$-normalized bump functions, for instance).

%\begin{defn}[Definition 2.1.1 of \cite{NagelRicciSteinSingularIntegralsWithFlagKernels}]\label{DefnProdKer}
\begin{defn}\label{DefnProdKer}
A product kernel relative
to the decomposition $\R^N=\R^{N_1}\times \cdots \times \R^{N_\nu}$
is a distribution $K\q(t_1,\ldots,t_\nu\w)$, which corresponds
with a $C^\infty$ function away from the coordinate axes $t_\mu=0$
and which satisfies:
\begin{itemize}
\item For each multi-index $\alpha=\q(\alpha_1,\ldots, \alpha_\nu\w)$,
there is a constant $C_\alpha$ such that
\begin{equation*}
\q|\partial_{t_1}^{\alpha_1} \cdots \partial_{t_\nu}^{\alpha_\nu} K\q(t\w)\w|\leq C_\alpha \q|t_1\w|^{-N_1-\q|\alpha_1\w|} \cdots \q|t_\nu\w|^{-N_{\nu}-\q|\alpha_\nu\w|}.
\end{equation*}

\item We proceed recursively on $\nu$:
\begin{itemize}
\item For $\nu=1$, given any normalized bump function $\phi$ and and $R>0$,
\begin{equation*}
\int K\q(t\w) \phi\q(Rt\w) \: dt
\end{equation*}
is bounded independently of $\phi$ and $R$.

\item For $\nu>1$, given any $1\leq \mu\leq \nu$, any normalized bump
function $\phi$ on $\R^{N_\mu}$, and any $R>0$, the distribution
\begin{equation*}
K_{\phi,R}\q(t_1,\ldots, t_{\mu-1}, t_{\mu+1},\ldots, t_\nu\w) = \int K\q(t\w) \phi\q(R t_\mu\w) \: dt_\mu
\end{equation*}
is a product kernel on the lower dimensional space $\R^{N_1}\times \cdots\times \R^{N_{\mu-1}}\times \R^{N_{\mu+1}}\times \cdots \times \R^{N_\nu}$, uniformly
in $R$ and $\phi$.
\end{itemize}
\end{itemize}
\end{defn}

\begin{rmk}
The setting in \cite{NagelRicciSteinSingularIntegralsWithFlagKernels} was slightly
more general:  product kernels were defined relative to a decomposition
of $\R^N$ into subspaces which were homogeneous under certain dilations.
Here, we have chosen the standard dilations on each $\R^{N_\mu}$.
The framework in 
our cited work
%\cite{StreetMultiParameterSingRadonLt,SteinStreetMultiParameterSingRadonLp}
takes into account these more general product kernels, and even kernels
which are not product kernels.  We do not discuss this here, though.
\end{rmk}

Given $\delta=\q(\delta_1,\ldots, \delta_\nu\w)\in \q[0,\infty\w)^\nu$,
define $\delta \q(t_1,\ldots, t_\nu\w) = \q(\delta_1 t_1,\ldots, \delta_\nu t_\nu\w)$.  Further, for $j=\q(j_1,\ldots, j_\nu\w)\in \Z^\nu$, define
$2^j=\q(2^{j_1},\ldots, 2^{j_\nu}\w)\in \q(0,\infty\w)^{\nu}$.
Finally, for a function $f\q(t\w)$ define
\begin{equation*}
\dil{f}{2^j}\q(t\w) = 2^{j_1N_1+\cdots + j_\nu N_\nu} f\q(2^j t\w).
\end{equation*}
Note that $\int \dil{f}{2^j}\q(t\w)\: dt = \int f\q(t\w)\: dt$.

Let $B^N\q(a\w)$ denote the ball in $\R^N$ of radius $a>0$.
We use the following characterization of product kernels,
which is a slight modification of a result of
\cite{NagelRicciSteinSingularIntegralsWithFlagKernels}.
%\begin{prop}[See Theorem 2.2.1 and Corollary 2.2.2 of \cite{NagelRicciSteinSingularIntegralsWithFlagKernels}]\label{PropDecompProdKer}
\begin{prop}\label{PropDecompProdKer}
Suppose $\q\{\eta_j\w\}_{j\in \N^\nu}$ is a bounded subset of
$C_0^{\infty}\q(B^N\q(a\w)\w)$ satisfying
\begin{equation}\label{EqnProdKernelCancel}
\int \eta_j\q(t\w) \: dt_\mu = 0,\text{ if }j_\mu\ne 0.
\end{equation}
Then the sum
\begin{equation*}
\sum_{j\in \N^\nu} \dil{\eta_j}{2^j}
\end{equation*}
converges in distribution to a product kernel which is supported in $B^N\q(a\w)$.
Conversely, every product kernel supported in $B^N\q(a\w)$ can be decomposed in this way.
\end{prop}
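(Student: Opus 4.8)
The plan is to treat the two directions separately, using induction on $\nu$ in both cases. For the \emph{direct} implication, I would first show that the partial sums $\sum_{j \in F} \dil{\eta_j}{2^j}$, $F \subseteq \N^\nu$ finite, are uniformly controlled as distributions against test functions, and in fact converge. The key mechanism is the cancellation hypothesis \eqref{EqnProdKernelCancel}: if $j_\mu \neq 0$, then $\dil{\eta_j}{2^j}$ has mean zero in the $t_\mu$ variable, so when we pair against a test function $\varphi$ and Taylor-expand $\varphi$ in the $t_\mu$ direction around $t_\mu = 0$ (for those $\mu$ with $j_\mu$ large) we gain a factor $2^{-j_\mu}$ from each such variable; in the variables where $j_\mu = 0$ there is no rescaling and no decay is needed. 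This gives geometric convergence of the sum in the space of distributions, and simultaneously the differential inequalities: to bound $\partial_{t_1}^{\alpha_1}\cdots\partial_{t_\nu}^{\alpha_\nu} K(t)$ at a point $t$ with $|t_\mu| \sim 2^{-k_\mu}$, only the terms with $j_\mu$ near $k_\mu$ contribute non-negligibly (terms with $j_\mu \gg k_\mu$ are killed by support considerations, terms with $j_\mu \ll k_\mu$ by the cancellation-plus-Taylor gain), and summing $O(1)$ relevant scales in each parameter reproduces the required homogeneity $|t_\mu|^{-N_\mu - |\alpha_\mu|}$. Finally, to verify that $K$ is itself a product kernel I would check the recursive condition: fixing $\mu$, a normalized bump $\phi$ on $\R^{N_\mu}$, and $R > 0$, one computes $K_{\phi,R}$ by choosing $j_\mu^0$ with $2^{j_\mu^0} \sim R$, splitting the sum over $j_\mu \le j_\mu^0$ and $j_\mu > j_\mu^0$; in the first range $\phi(R t_\mu)$ is essentially constant on the support of $\dil{\eta_j}{2^j}$ so the $t_\mu$-integral is handled by cancellation, in the second range one rescales, and in both cases one is left with a sum of the form $\sum_{j'} \dil{\widetilde\eta_{j'}}{2^{j'}}$ over the remaining $\nu - 1$ parameters with the analogous cancellation, so the inductive hypothesis applies.

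For the \emph{converse}, given a product kernel $K$ supported in $B^N(a)$, the plan is to build a Littlewood--Paley-type resolution of the identity adapted to the product structure. Pick $\chi \in C_0^\infty(B^{N_\mu}(1))$ with $\int \chi = 1$ for each $\mu$, form the one-parameter differences $\psi^{(\mu)}_{j_\mu} = \dil{\chi}{2^{j_\mu}} - \dil{\chi}{2^{j_\mu - 1}}$ for $j_\mu \ge 1$ and $\psi^{(\mu)}_0 = \dil{\chi}{2^0}$ (interpreting these as convolution operators in the $t_\mu$ variable only), so that $\sum_{j_\mu \ge 0} \psi^{(\mu)}_{j_\mu} * \cdot = \mathrm{Id}$ telescopically, and $\psi^{(\mu)}_{j_\mu}$ has mean zero for $j_\mu \ge 1$. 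Applying the tensor product $\bigotimes_\mu \psi^{(\mu)}_{j_\mu}$ to $K$ and summing over $j \in \N^\nu$ recovers $K$; setting $\eta_j := (\bigotimes_\mu \psi^{(\mu)}_{j_\mu}) * K$, appropriately rescaled back by $\dil{(\cdot)}{2^{-j}}$, gives the desired decomposition. The mean-zero property \eqref{EqnProdKernelCancel} is inherited from that of $\psi^{(\mu)}_{j_\mu}$ when $j_\mu \ne 0$. The work is in showing $\{\eta_j\}$ is a bounded subset of $C_0^\infty(B^N(a'))$ for a slightly enlarged $a'$ (or, after a harmless dilation, of $B^N(a)$): the support statement follows from the supports of $\chi$ and of $K$, while the uniform $C^k$ bounds follow from the product-kernel differential inequalities together with the cancellation of the $\psi^{(\mu)}_{j_\mu}$ — this is again a scale-by-scale estimate, essentially the standard almost-orthogonality computation, carried out in each of the $\nu$ parameters and using the recursive definition of product kernel to handle the mixed terms where some $j_\mu = 0$ and others do not.

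The main obstacle, in both directions, is the bookkeeping forced by the parameters $\mu$ for which $j_\mu = 0$: unlike the purely homogeneous (flag/product) calculus where every variable is rescaled, here a single distribution $\eta_j$ may be ``at unit scale'' in some factors and ``at fine scale'' in others, and the cancellation condition \eqref{EqnProdKernelCancel} is correspondingly only imposed on the fine-scale factors. This means the convergence and the differential estimates cannot be done by a single global rescaling argument; one must organize the sum according to the subset $S = \{\mu : j_\mu \ne 0\}$ and, for each $S$, run a homogeneous-type argument in the $S$-variables while treating the complementary variables as frozen parameters — which is exactly what the recursive structure of Definition \ref{DefnProdKer} is designed to accommodate. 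Modulo this organizational point, all the analytic estimates are the standard Calderón--Zygmund / almost-orthogonality ones applied one parameter at a time, and I would cite \cite{NagelRicciSteinSingularIntegralsWithFlagKernels} for the one-parameter building blocks and for the original (dilation-structure) version of the statement, of which this is a mild reformulation.
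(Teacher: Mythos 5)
The paper does not actually prove this proposition; it is stated as a slight modification of a result in \cite{NagelRicciSteinSingularIntegralsWithFlagKernels} and the reader is referred there. So the comparison here is with the standard argument, not with a proof in the paper.

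Your sketch of the direct implication is essentially right in outline, with two small misattributions. The size (differential) inequalities do not actually need cancellation: at a point with $\q|t_\mu\w|\sim 2^{-k_\mu}$ the terms with $j_\mu>k_\mu+C$ vanish for support reasons, and the remaining geometric sum $\sum_{j_\mu\le k_\mu}2^{j_\mu\q(N_\mu+\q|\alpha_\mu\w|\w)}$ already gives $\q|t_\mu\w|^{-N_\mu-\q|\alpha_\mu\w|}$; cancellation is what is needed for the distributional convergence and for the recursive normalized--bump condition. Also, in your verification of the recursive condition the two ranges are swapped: $\phi\q(R t_\mu\w)$ is essentially constant on the support when $2^{-j_\mu}\ll R^{-1}$, i.e.\ for $j_\mu>j_\mu^0$, and that is where the cancellation of $\eta_j$ is exploited, while for $j_\mu\le j_\mu^0$ the factor $\phi\q(R t_\mu\w)$ truncates to the ball $\q|t_\mu\w|\lesssim R^{-1}$ and size bounds alone give a geometric sum.

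The converse direction, however, has a genuine gap. Setting $\eta_j:=\dil{\q(\q(\bigotimes_\mu\psi^{(\mu)}_{j_\mu}\w)*K\w)}{2^{-j}}$ does not produce a bounded family in $C_0^\infty\q(B^N\q(a'\w)\w)$, because the supports are not what you claim. The convolution $\q(\bigotimes_\mu\psi^{(\mu)}_{j_\mu}\w)*K$ is supported in a ball $B^N\q(a+C\w)$ of \emph{fixed} size (the support of $K$ plus the support of the bump), and after rescaling by $\dil{\cdot}{2^{-j}}$ one gets $\supp\eta_j\subseteq\q\{t:\q|t_\mu\w|\lesssim 2^{j_\mu}\q(a+C\w)\w\}$, which is \emph{unbounded} in $j$; the claim that ``the support statement follows from the supports of $\chi$ and of $K$'' is false, and this is not a harmless dilation of $B^N\q(a\w)$. (What the convolution approach does give, using the one vanishing moment of $\psi^{(\mu)}_{j_\mu}$ and the smoothness of $K$ off the axes, is decay of $\eta_j\q(s\w)$ like $\q|s_\mu\w|^{-N_\mu-1}$ for $\q|s_\mu\w|\ge 1$ --- rapid only if one builds $\psi$ with many vanishing moments, hence typically Schwartz rather than compactly supported.) The standard fix is different: decompose $K=\sum_{j}\bigl(\prod_\mu\chi^{(\mu)}_{j_\mu}\q(t_\mu\w)\bigr)K$ using \emph{physical--space annular cutoffs}, with $\chi^{(\mu)}_{j_\mu}$ supported where $\q|t_\mu\w|\sim 2^{-j_\mu}$ (and $\chi^{(\mu)}_{0}$ supported in $\q|t_\mu\w|\lesssim 1$). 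The differential inequalities for $K$ make these pieces, after rescaling, a bounded family with uniform compact supports; what they lack is the cancellation \eqref{EqnProdKernelCancel}. One then restores it by a mean--redistribution argument: the normalized--bump condition in Definition~\ref{DefnProdKer} guarantees that the tails $S_{j_\mu}^{\mu}=\sum_{k\ge j_\mu}\int\eta_{(\dots,k,\dots)}\,dt_\mu$ converge, and one transfers these means between adjacent scales by adding and subtracting a fixed bump and its dilate, telescoping to zero. This must be carried out iteratively in each parameter $\mu$, using the recursive structure of Definition~\ref{DefnProdKer} exactly as you indicate. With this replacement for the construction of $\eta_j$, the rest of your plan goes through.
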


Recall our object of study.  We are interested in operators of the form
\begin{equation*}
Tf\q(x\w) = \psi\q(x\w) \int f\q(\gamma_t\q(x\w)\w) K\q(t\w)\: dt,
\end{equation*}
where $K\q(t\w)$ is a product kernel supported in $B^N\q(a\w)$ for some
small $a>0$.
Proposition \ref{PropDecompProdKer} allows us to decompose
\begin{equation*}
K\q(t\w) = \sum_{j\in \N^\nu} \dil{\eta_j}{2^j}\q(t\w),
\end{equation*}
where $\q\{\eta_j\w\}_{j\in \N^\nu}\subset C_0^\infty\q(B^N\q(a\w)\w)$
is a bounded set, satisfying \eqref{EqnProdKernelCancel}.
This yields a corresponding decomposition of $T$.  Indeed,
define
\begin{equation}\label{EqnDefnTj}
T_j f\q(x\w) = \psi\q(x\w) \int f\q(\gamma_t\q(x\w)\w) \dil{\eta_j}{2^j}\q(t\w)\: dt.
\end{equation}
Then, $T=\sum_{j\in \N^\nu} T_j$.

\section{Scaling and the Frobenius theorem}\label{SectionScaling}
The heart of the proof that $T$ is bounded on $L^p$ ($1<p<\infty$)
is the idea that for $j,k\in \N^\nu$, $j\ne k$, $T_j$ and $T_k$
are essentially of the same form, but at different ``scales.''
Because they reside at different scales, we will obtain, for instance,
the almost orthogonality estimate,
\begin{equation}\label{EqnToShowAO}
\LtOpN{T_j^{*} T_k}, \LtOpN{T_j T_k^{*}} \lesssim 2^{-\epsilon\q|j-k\w|},
\end{equation}
for some $\epsilon>0$.  The $L^2$ boundedness of $T$ then follows
immediately from the Cotlar-Stein lemma.

The key to proving this almost orthogonality is to use an appropriate
scaling map which is adapted to the vector fields.
This scaling map was developed in \cite{StreetMultiParameterCCBalls},
based on ideas from
\cite{NagelSteinWaingerBallsAndMetricsDefinedByVectorFields,
TaoWrightLpImprovingBoundsForAverages}.
As was pointed out in \cite{StreetMultiParameterCCBalls}, and as will
be discussed below, these scaling maps can be viewed as the coordinate
charts defining the leaves in a quantitative version of the
Frobenius theorem on involutive distributions.

Let $\sF$ be the finite set of vector fields with formal degrees given in the finite type condition.
Enumerate the set $\sF$ to obtain a finite list of vector fields with
formal degrees $\q(X_1,d_1\w),\ldots, \q(X_q,d_q\w)$.
Notice, by the finite type assumption,
we have
\begin{equation}\label{EqnNSWInvol}
\q[X_j,X_k\w]=\sum_{d_l\leq d_j+d_k} c_{j,k}^l X_l,
\end{equation}
where $d_l\leq d_j+d_k$ denotes that the inequality holds coordinatewise,
and $c_{j,k}^l\in C^\infty$.

Let $\sD$ be the distribution\footnote{Here, we are using distribution to mean
$C^\infty$ module of vector fields.} generated by $X_1,\ldots, X_q$.  Note that $\sD$
is involutive:  if $X,Y\in \sD$, then $\q[X,Y\w]\in \sD$.
This follows directly from \eqref{EqnNSWInvol}.
Because of this, the classical Frobenius theorem applies to foliate
the ambient space into leaves; the tangent bundle to each
leaf given by $\sD$.

However, more is true.  Let $\delta\in\q[0,1\w]^\nu$ and multiply both
sides of \eqref{EqnNSWInvol} by $\delta^{d_j+d_k}$ to obtain,
\begin{equation*}
\q[\delta^{d_j} X_j, \delta^{d_k}X_k\w] = \sum_{d_l\leq d_j+d_k} \q(\delta^{d_j+d_k-d_l} c_{j,k}^l\w) \delta^{d_l} X_l.
\end{equation*}
Defining
\begin{equation*}
c_{j,k}^{l,\delta}=\begin{cases}
\delta^{d_j+d_k-d_l}c_{j,k}^l & \text{if }d_l\leq d_j+d_k\\
0 & \text{otherwise,}
\end{cases}
\end{equation*}
we have that $c_{j,k}^{l,\delta}\in C^\infty$ uniformly in $\delta$, and
\begin{equation*}
\q[\delta^{d_j} X_j, \delta^{d_k} X_k\w] = \sum_{l} c_{j,k}^{l,\delta} \delta^{d_l} X_l.
\end{equation*}
Because of this one might hope that the Frobenius theorem holds when applied
to the vector fields $\delta^{d_j}X_j$ ``uniformly'' in $\delta$ in the sense
that appropriate coordinate charts defining the leaves may be controlled
``uniformly'' in $\delta$.  Indeed, this turns out to be the case
and is the main theorem of \cite{StreetMultiParameterCCBalls}.

We turn to describing this theorem now.  Let $Z_j=\delta^{d_j} X_j$
so that
\begin{equation*}
\q[Z_j,Z_k\w]=\sum_{l} \ct_{j,k}^l Z_l,
\end{equation*}
with $\ct_{j,k}^l\in C^\infty$ uniformly in $\delta$.
Fix $x_0\in \R^n$.  Our goal is to create a ``good'' coordinate
chart near $x_0$ on the leaf passing through $x_0$ generated by $Z_1,\ldots, Z_q$
(note that this leaf is independent of $\delta\in \q(0,1\w)^\nu$, but
we wish to have good estimates on this chart in terms of $Z_1,\ldots, Z_q$;
in particular, the theorem will be meaningful even if $Z_1,\ldots, Z_q$ span
the tangent space and the leaf is the entire ambient space).

For $\xi>0$, $x_0\in \R^n$, define
\begin{equation*}
\begin{split}
B_Z\q(x_0,\xi\w) = \bigg\{y \: \bigg|\: &\exists \gamma:\q[0,1\w]\rightarrow \R^n, \gamma\q(0\w) = x_0, \gamma\q(1\w)=y,\\
&\gamma'\q(t\w) = \sum_{j=1}^q a_j\q(t\w) Z_j\q(\gamma\q(t\w)\w), \q|a_j\q(t\w)\w|< \xi, \: \forall t
\bigg\};
\end{split}
\end{equation*}
$B_Z\q(x_0,\xi\w)$ is called a Carnot-Carath\'eodory ball.  Note that
$B_Z\q(x_0,\xi\w)$ is an open neighborhood of the point $x_0$ on the leaf
passing through $x_0$ generated by $Z_1,\ldots, Z_q$.
Let $n_0=\dim\Span{Z_1\q(x_0\w),\ldots, Z_q\q(x_0\w)}$ (the dimension
of this leaf).
The quantitative Frobenius theorem we will use is
\begin{thm}[\cite{StreetMultiParameterCCBalls}]\label{ThmFrob}
There exist $\xi,\eta\approx 1$ and a smooth map
$\Phi:B^{n_0}\q(\eta\w)\rightarrow B_Z\q(x_0,1\w)$ such that:
\begin{itemize}
\item $\Phi$ is one-to-one.
\item $B_Z\q(x_0,\xi\w)\subseteq \Phi\q(B^{n_0}\q(\eta\w)\w)$.
\item If we let $Y_j$ be the pull back of $Z_j$ via the map $\Phi$ to
$B^{n_0}\q(\eta\w)$, then $Y_1,\ldots, Y_q$ span the tangent space.
More precisely,
\begin{equation*}
1\lesssim 
%\sup_{1\leq j_1,\ldots, j_{n_0}\leq q} 
\inf_{u\in B^{n_0}\q(\eta\w)} 
\sup_{1\leq j_1,\ldots, j_{n_0}\leq q} 
\q|\det\q[Y_{j_1}\q(u\w) : \cdots : Y_{j_{n_0}}\q(u\w)\w]\w|.
\end{equation*}
\item $Y_1,\ldots, Y_q$ are smooth.  That is,
\begin{equation*}
\CjN{Y_j}{m}{B^{n_0}\q(\eta\w)}\lesssim 1.
\end{equation*}
\end{itemize}
In the above, all implicit constants can be chosen to depend only
on $n$, upper bounds for a finite number of the $C^m$ norms
of the $Z_j$ and $\ct_{j,k}^l$, and $q$.
In particular, when $Z_j=\delta^{d_j} X_j$ as in our primary example,
the implicit constants are independent of $\delta$.
\end{thm}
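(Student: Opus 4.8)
The plan is to build $\Phi$ from a \emph{maximal} sub-frame by composing flows, and to reduce every estimate to a single linear ODE governed by the structure equations $\q[Z_j,Z_k\w]=\sum_l\ct_{j,k}^l Z_l$. First I would fix the frame: let
\[
\lambda := \max_{1\le j_1<\cdots<j_{n_0}\le q}\mathrm{vol}_{n_0}\q(Z_{j_1}(x_0),\dots,Z_{j_{n_0}}(x_0)\w)
\]
be the largest $n_0$-dimensional volume spanned by an $n_0$-element sub-collection of the $Z_j(x_0)$; it is nonzero by the definition of $n_0$, and after reindexing I assume it is attained at $j_a=a$. Maximality forces, for $j>n_0$, an identity $Z_j(x_0)=\sum_{l\le n_0}a_j^l Z_l(x_0)$ with $\q|a_j^l\w|\le 1$ by Cramer's rule; since $Z_1(x_0),\dots,Z_{n_0}(x_0)$ is then a basis of the (locally graph-like) leaf through $x_0$, the same holds on a uniform neighborhood with $C^\infty$ functions $a_j^l$.

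Next, writing $\theta^s_j$ for the time-$s$ flow of $Z_j$, define $\Phi(u_1,\dots,u_{n_0}):=\theta^{u_1}_1\circ\theta^{u_2}_2\circ\cdots\circ\theta^{u_{n_0}}_{n_0}(x_0)$. Then $\partial_{u_k}\Phi(u)=\q[\q(\theta^{u_1}_1\circ\cdots\circ\theta^{u_{k-1}}_{k-1}\w)_* Z_k\w]\q(\Phi(u)\w)$, and the key point is that each pushforward $s\mapsto\q(\theta^s_j\w)_* Z_m$ solves the linear ODE $\frac{d}{ds}\q(\theta^s_j\w)_* Z_m=-\sum_{m'}\q(\ct_{j,m}^{m'}\circ\theta^{-s}_j\w)\q(\theta^s_j\w)_* Z_{m'}$, a \emph{closed} system in the $q$ vector-field-valued unknowns $\q(\theta^s_j\w)_* Z_1,\dots,\q(\theta^s_j\w)_* Z_q$ whose coefficient matrix is bounded purely in terms of the $C^m$ norms of the $\ct$'s and $Z$'s. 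Its transition matrix is therefore $C^m$-bounded by the data and equals the identity at $s=0$, so iterating over the $k$ flows gives
\[
d\Phi_u\q(\partial_{u_k}\w)=\sum_{m'=1}^{q}\Psi_{m'}^{(k)}(u)\, Z_{m'}\q(\Phi(u)\w),\qquad \Psi_{m'}^{(k)}(0)=\delta_{k,m'},\qquad \CjN{\Psi_{m'}^{(k)}}{m}{B^{n_0}(\eta)}\lesssim 1.
\]

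From this packaged identity the theorem follows. Since $Y_1=\Phi^*Z_1=\partial_{u_1}$ exactly and, by the above with $\Psi^{(k)}(0)$ a standard basis vector, the frame $Y_1,\dots,Y_{n_0}$ is a $C^m$-small perturbation of $\partial_{u_1},\dots,\partial_{u_{n_0}}$ once $\eta$ is small depending only on the data, one gets simultaneously $\det\q[Y_1(u):\cdots:Y_{n_0}(u)\w]\gtrsim 1$ (hence the $\inf$--$\sup$ bound) and, after the triangular inversion expressing the $\partial_{u_k}$ through the $Y_l$, the estimates $\CjN{Y_l}{m}{B^{n_0}(\eta)}\lesssim 1$; the remaining $Y_j$, $j>n_0$, equal $\sum_{l\le n_0}\q(a_j^l\circ\Phi\w)Y_l$ and are controlled as well. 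The factorization $d\Phi=\q[Z_1(\Phi):\cdots:Z_q(\Phi)\w]\Psi$ also shows $\Jac\Phi$ does not vanish on $B^{n_0}(\eta)$, so the quantitative inverse function theorem makes $\Phi$ one-to-one with $\eta,\xi\approx 1$. Finally, a sub-unit curve $\gamma$ for the $Z_j$ issuing from $x_0$ can, as long as it stays in $\Phi\q(B^{n_0}(\eta)\w)$, be written $\gamma=\Phi\circ\sigma$ with $\dot\sigma$ a combination of the $Y_j(\sigma)$ having coefficients $<\xi$; the bound $\CjN{Y_j}{0}{B^{n_0}(\eta)}\lesssim 1$ forces $\q|\sigma(t)-\sigma(0)\w|\lesssim\xi$ for all $t$, and an open/closed argument in $t$ shows $\gamma$ never leaves the image once $\xi$ is small compared with $\eta$; hence $B_Z(x_0,\xi)\subseteq\Phi\q(B^{n_0}(\eta)\w)$.

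The main obstacle, and the reason this is a theorem rather than an exercise, is keeping \emph{every} constant dependent only on $n$, $q$, and finitely many $C^m$ norms of the $Z_j$ and $\ct_{j,k}^l$, with no dependence on the possibly tiny quantity $\lambda$; this is exactly what makes the bounds uniform in $\delta$ when $Z_j=\delta^{d_j}X_j$, since then the $\ct$'s stay bounded while $\lambda\approx\delta^{d_1+\cdots+d_{n_0}}\to 0$. The device that achieves it is to propagate \emph{all} $q$ pushforwards $\q(\theta^s_j\w)_*Z_m$ simultaneously, so that only the bounded structure coefficients $\ct_{j,k}^l$ ever enter the ODE and $\lambda$ is never divided by until the final, harmless triangular inversion; one must also bound the $C^m$ norms of these pushforwards over the whole ball rather than just at $x_0$, which is done by differentiating the ODE and the structure equations and checking that only finitely many derivatives of the data ever appear. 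Carrying out this book-keeping, together with the degenerate case in which the leaf is a proper submanifold so that $\Phi$ must be shown to chart it, is the content of \cite{StreetMultiParameterCCBalls}.
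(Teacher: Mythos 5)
Theorem~\ref{ThmFrob} is not proved in this paper at all; it is imported as a black box from \cite{StreetMultiParameterCCBalls}. That reference constructs the chart as a single simultaneous exponential $\Phi(u)=e^{u_1 Z_{j_1}+\cdots+u_{n_0}Z_{j_{n_0}}}(x_0)$ in the spirit of Nagel--Stein--Wainger and Tao--Wright, while you build $\Phi$ as a composition of one-parameter flows (canonical coordinates of the second kind). Both variants can be made to work, and both rest on the device you correctly single out: the structure equations make the evolution of the pushed-forward frame a closed linear ODE with coefficients controlled only by the $\ct_{j,k}^l$, so the small volume $\lambda$ is never divided by.

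Where your write-up has a genuine gap is in the passage from the pointwise estimate at $x_0$ to uniform estimates on all of $B^{n_0}(\eta)$. After obtaining $Z_j(x_0)=\sum_{l\le n_0}a_j^l\,Z_l(x_0)$ with $\q|a_j^l\w|\le 1$, you assert that ``the same holds on a uniform neighborhood with $C^\infty$ functions $a_j^l$,'' justified only by the basis property of $Z_1(x_0),\dots,Z_{n_0}(x_0)$. The basis property gives existence and smoothness of the $a_j^l$ near $x_0$, but no size bound; the assertion is in fact false on any ambient neighborhood of fixed Euclidean radius. For example, take $n=n_0=1$, $Z_1=\lambda\partial_x$, $Z_2=(\lambda+x)\partial_x$: then $\q[Z_1,Z_2\w]=Z_1$, so $\ct_{1,2}^1=1$ and the $C^m$ norms of the $Z_j$ and $\ct$'s are $O(1)$, yet $a_2^1(x)=1+x/\lambda$ is of size $1/\lambda$ at $\q|x\w|\approx 1$. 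What you actually need, and what is true, is that $a_j^l\circ\Phi$ is $O(1)$ on $B^{n_0}(\eta)$ (in the example $a_2^1(\Phi(u))=1+u$), and this is itself a Gronwall-type consequence of the very ODE you set up --- not of maximality at the single point $x_0$ --- which the proof never propagates. The same omission undermines injectivity: since $\q|\det d\Phi_0\w|\approx\lambda$ may be tiny while the $C^2$ norm of $\Phi$ is only $O(1)$, the quantitative inverse function theorem invoked as stated gives injectivity only on a ball of radius $\approx\lambda$; obtaining $\eta\approx 1$ requires first showing $\det d\Phi_u\approx\det d\Phi_0$ throughout $B^{n_0}(\eta)$, which is again the ODE estimate. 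Your closing paragraph shows you know this uniformity is the crux, but the body of the argument substitutes continuity and basis arguments that do not yield $\lambda$-independent constants, and that is exactly the point at which the write-up stops being a proof.
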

Notice that the implicit constants in Theorem \ref{ThmFrob} do {\it not}
depend on lower bounds for quantities like
\begin{equation*}
\q|\det_{n_0\times n_0} \q[Z_1\q(x_0\w):\cdots: Z_q\q(x_0\w)\w]\w|,
%\sup_{1\leq j_1,\ldots, j_{n_0}\leq q} \q|\det\q[Z_{j_1}\q(x_0\w):\cdots :Z_{j_{n_0}}\q(x_0\w)\w]\w|.
\end{equation*}
where $\det_{n_0\times n_0} A$ denotes the vector whose coordinates
are the determinates of the $n_0\times n_0$ submatricies of $A$.
This is what sets Theorem \ref{ThmFrob} apart from classical versions
of the Frobenius theorem.

In particular, recall the sets $\sP$ and $\sN$ of vector fields 
with formal degrees
corresponding
to the pure powers and non-pure powers, respectively.
From the finite type condition we obtain the vector fields
$\sF = \q\{\q(X_1,d_1\w),\ldots, \q(X_q,d_q\w)\w\}$.
For each $j_0\in \N^\nu$, and $x_0\in \R^n$ ($x_0$ near $0$),
we consider the vector fields
$$Z_1=2^{-j_0\cdot d_1}X_1,\ldots, Z_q=2^{-j_0\cdot d_q}X_q,$$
which satisfy the hypotheses of Theorem \ref{ThmFrob}
uniformly in $j_0,x_0$.  We therefore obtain the map
$\Phi=\Phi_{x_0,j_0}$ as in Theorem \ref{ThmFrob}.
Let $Y_1,\ldots, Y_q$ be the pull backs of $Z_1,\ldots, Z_q$
so that $Y_1,\ldots, Y_q$ span the tangent space uniformly in $j_0,x_0$.

Further, we let $Y_\alpha$ be the pull back of $2^{-j_0\cdot \alpha} X_\alpha$,
where
\begin{equation*}
\gamma_t\q(x\w) = \exp\q(\sum_{0<\q|\alpha\w|\leq L} t^\alpha X_\alpha\w) x.
\end{equation*}
By \eqref{EqnFiniteTypeControl} and \eqref{EqnAlgControl} $Y_\alpha$
is a finite linear combination, with coefficients in $C^\infty$ of
the $Y_l$.  Thus $Y_\alpha\in C^\infty$ uniformly in $j_0$.
Furthermore, since each $Y_l$ is in the Lie algebra generated
by $Y_\alpha$ where $\alpha$ is a pure power (this comes from the corresponding
statement about the $X_l$), and since the $Y_l$ span the tangent
space uniformly in $j_0$, we have that $\q\{Y_\alpha : \alpha\text{ is a pure power}\w\}$ satisfies H\"ormander's condition, uniformly in $j_0$.

Let $j,k\in \N^\nu$ and set $j_0=j\wedge k\in \N^\nu$.  I.e.,
the $\mu$th coordinate of $j_0$ is the minimum of the $\mu$th coordinates
of $j$ and $k$.
Consider,
\begin{equation*}
\gamma_{2^{-k}t}\circ\gamma_{2^{-j}s}^{-1}\q(x\w) = 
\exp\q(\sum_{0<\q|\alpha\w|\leq L} t^{\alpha} 2^{-k\cdot \deg\q(\alpha\w)} X_\alpha\w)
\exp\q(-\sum_{0<\q|\alpha\w|\leq L} s^{\alpha} 2^{-j\cdot \deg\q(\alpha\w)} X_\alpha\w)x.
\end{equation*}
Hence, if we define,
\begin{equation*}
\theta_{t,s}\q(u\w) = \Phi_{x_0,j_0}^{-1}\circ \gamma_{2^{-k} t}\circ \gamma_{2^{-j}s}^{-1}\circ \Phi_{x_0,j_0}\q(u\w),
\end{equation*}
then,
\begin{equation}\label{EqnThetaAsExp}
\theta_{t,s}\q(u\w) = 
\exp\q(\sum_{0<\q|\alpha\w|\leq L} t^{\alpha} 2^{-\q(k-j_0\w)\cdot \deg\q(\alpha\w)} Y_\alpha\w)
\exp\q(-\sum_{0<\q|\alpha\w|\leq L} s^{\alpha} 2^{-\q(j-j_0\w)\cdot \deg\q(\alpha\w)} Y_\alpha\w)u.
\end{equation}
We have suppressed the dependence of $\theta$ on $j,k,x_0$.

Recall, $\alpha$ is a pure power if $\deg\q(\alpha\w)$ is non-zero in only
one component.  Thus, if $\alpha$ is a pure power,
either $\q(k-j_0\w)\cdot \deg\q(\alpha\w)$ or $\q(j-j_0\w)\cdot \deg\q(\alpha\w)$
will be $0$.
It follows that each vector field $Y_\alpha$ corresponding to a pure power
appears {\it unscaled} in at least one of the two exponentials
in \eqref{EqnThetaAsExp}.  Hence, the vector fields in the exponentials
of \eqref{EqnThetaAsExp} (namely, $\q\{2^{-\q(k-j_0\w)\cdot \deg\q(\alpha\w)}Y_\alpha, 2^{-\q(j-j_0\w)\cdot \deg\q(\alpha\w)}Y_\alpha\w\}$) satisfy H\"ormander's condition
uniformly in $j,k,x_0$.
This argument is the main reason we had to distinguish between
pure powers and non-pure powers.

The way $\theta$ comes up in our argument is as follows,
\begin{equation*}
T_j^{*} T_k f\q(x\w) = \psi\q(x\w) \int f\q(\gamma_{2^{-k}t}\circ \gamma_{2^{-j}s}^{-1}\q(x\w)\w) \kappa\q(s,t,x\w) \overline{\eta_j\q(s\w)} \eta_k\q(t\w)\:ds\: dt,
\end{equation*}
where $\kappa\in C^\infty$.  Thus, if we let $\Phi_{x_0,j_0}^{\#}$ denote the 
map $\Phi_{x_0,j_0}^{\#} f = f\circ \Phi_{x_0,j_0}$, then we see,
\begin{equation}\label{EqnTjTkPullback}
\Phi_{x_0,j_0}^{\#} T_j^{*} T_k \q(\Phi_{x_0,j_0}^{\#}\w)^{-1} g \q(u\w) = 
\psi\q(\Phi_{x_0,j_0}\q(u\w)\w) \int g\q(\theta_{t,s}\q(u\w)\w) \kapt\q(s,t,u\w) \overline{\eta_j\q(s\w)}\eta_k\q(t\w)\: ds\: dt.
\end{equation}
\eqref{EqnTjTkPullback} alone is not quite enough to directly
show \eqref{EqnToShowAO}, however it is close.
We outline the proof of the $L^2$ theorem in Section \ref{SectionL2}.

\section{The $L^2$ theorem}\label{SectionL2}
In this section, we outline the proof of the case $p=2$
of Theorem \ref{ThmMainThm} (for the operator $T$).
This describes work from
\cite{StreetMultiParameterSingRadonLt}.

As discussed in Section \ref{SectionScaling}, our goal is to show
\begin{equation}\label{EqnToShowAO2}
\LtOpN{T_j^{*} T_k}, \LtOpN{T_j T_k^{*}} \lesssim 2^{-\epsilon\q|j-k\w|},
\end{equation}
where $T_j$ and $T_k$ are defined in \eqref{EqnDefnTj}.
We focus on the estimate for $T_j^{*}T_k$, the other estimate being
similar.
In what follows $\epsilon>0$ is a constant that may change from
line to line, but is always independent of $j,k$.
To prove \eqref{EqnToShowAO2} it suffices to show
\begin{equation}\label{EqnToShowAO3}
\LtOpN{\q(T_k^{*}T_j T_j^{*} T_k\w)^{n+1}} \lesssim 2^{-\epsilon\q|j-k\w|},
\end{equation}
which proves \eqref{EqnToShowAO2} with $\epsilon$ replaced by $\frac{\epsilon}{2\q(n+1\w)}$.  Recall, $n$ is the dimension of the ambient space.
It is easy to see that
\begin{equation*}
\LpOpN{1}{T_k^{*} T_j T_j^{*} T_k}\lesssim 1,
\end{equation*}
and therefore to prove \eqref{EqnToShowAO3}, interpolation shows that it suffices to show
\begin{equation}\label{EqnToShowAO4}
\LpOpN{\infty}{\q(T_k^{*}T_j T_j^{*} T_k\w)^{n+1}}\lesssim 2^{-\epsilon \q|j-k\w|}.
\end{equation}

Fix a point $x_0$ and a bounded function $f$.  To show
\eqref{EqnToShowAO4}
we show that
\begin{equation}\label{EqnToShowAO5}
\q| \q(T_k^{*} T_j T_j^{*} T_k \w)^{n+1} f\q(x_0\w)\w| \lesssim 2^{-\epsilon\q|j-k\w|} \LpN{\infty}{f}.
\end{equation}
Recall
\begin{equation}\label{EqnRecallTjAO}
T_j f\q(x\w) =\psi\q(x\w) \int f\q(\gamma_t\q(x\w)\w) \dil{\eta_j}{2^j}\q(t\w) \: dt,
\end{equation}
where $\eta_j\in C_0^\infty\q(B^N\q(a\w)\w)$ (and satisfies certain
cancellation conditions); with a similar formula for $T_k$.  Here,
$a>0$ is small if the support of $K$ is small, and therefore, we may
take $a>0$ as small as we like (since the Theorem \ref{ThmMainThm}
is about kernels $K$ with small support).

Since $T_k^{*}$ and $T_j^{*}$ are essentially of the same form
as $T_k$ and $T_j$ (with $\gamma_t$ replaced by $\gamma_t^{-1}$), by taking $a>0$ small enough,
\eqref{EqnRecallTjAO} shows that 
$ \q(T_k^{*} T_j T_j^{*} T_k \w)^{n+1} f\q(x_0\w)$
depends only on the values of $f$ in $B_{Z}\q(x_0,\xi\w)$
where $j_0=j\wedge k$, $Z_1=2^{-j_0\cdot d_1} X_1,\ldots, Z_q=2^{-j_0\cdot d_q} X_q$, and $\xi$ is as in Theorem \ref{ThmFrob}.
Thus to prove \eqref{EqnToShowAO5} it suffices to show
\begin{equation*}
\q|\Phi_{x_0,j_0}^{\#} \q(T_k^{*} T_j T_j^{*} T_k \w)^{n+1} \q(\Phi_{x_0,j_0}^{\#}\w)^{-1} g\q(0\w) \w|\lesssim 2^{-\epsilon\q|j-k\w|} \LpN{\infty}{g},
\end{equation*}
where $g$ is supported on $B^{n_0}\q(\eta\w)$ (and $n_0$, $\eta>0$ are
as in Theorem \ref{ThmFrob}).

Putting all this together, to prove \eqref{EqnToShowAO2} it suffices to show
\begin{equation}\label{EqnToShowAO6}
\LpOpN{\infty}{\q(\Phi_{x_0,j_0}^{\#} T_k^{*} T_j T_j^{*} T_k \q(\Phi_{x_0,j_0}^{\#}\w)^{-1}\w)^{n+1}}\lesssim 2^{-\epsilon\q|j-k\w|},
\end{equation}
where $\epsilon>0$ and the implicit constant are independent
of $j$, $k$, and $x_0$.
Fortunately, \eqref{EqnToShowAO6} follows from
methods developed in the single parameter case
by Christ, Nagel, Stein, and Wainger
\cite{ChristNagelSteinWaingerSingularAndMaximalRadonTransforms}.

We close this section by briefly outlining
the proof of \eqref{EqnToShowAO6};
% based on the ideas
%of \cite{ChristNagelSteinWaingerSingularAndMaximalRadonTransforms};
we refer the reader to
\cite{StreetMultiParameterSingRadonLt} for more precise details.
By a computation like the one leading up to \eqref{EqnTjTkPullback}
and an application of the Campbell-Hausdorff formula
it is easy to see that
\begin{equation*}
\Phi_{x_0,j_0}^{\#} T_k^{*} T_j T_j^{*} T_k \q(\Phi_{x_0,j_0}^{\#}\w)^{-1} g\q(u\w) = \psit\q(u\w) \int f\q(\Theta_t\q(u\w)\w) \kapt\q(t,u\w)\etat\q(t\w) \: dt,
\end{equation*}
where $\etat\in C_0^\infty$ and is supported near $0\in \R^{4N}$, $\kapt\in C^\infty$, and\footnote{\eqref{EqnThetaAsym}
means that $\Theta_t \q(u\w)  = \exp\q(\sum_{0<\q|\alpha\w|\leq L} t^{\alpha} W_\alpha\w)u
+O\q(\q|t\w|^{L+1}\w)$ as $t\rightarrow 0$.}
\begin{equation}\label{EqnThetaAsym}
\Theta_t \q(u\w) \sim \exp\q(\sum_{0<\q|\alpha\w|} t^{\alpha} W_\alpha\w)u,
\end{equation}
and $\q\{W_\alpha\w\}$ satisfy H\"ormander's condition, uniformly
in any relevant parameters.
Using results from \cite{ChristNagelSteinWaingerSingularAndMaximalRadonTransforms}
it is then easy to show that
\begin{equation}\label{EqnSmoothAfterIter}
\q[\Phi_{x_0,j_0}^{\#} T_k^{*} T_j T_j^{*} T_k \q(\Phi_{x_0,j_0}^{\#}\w)^{-1} \w]^{n} g\q(u\w) =  \psit\q(u\w) \int g\q(v\w) h\q(u,v\w)\: dv,
\end{equation}
where $h\q(u,v\w)$ has a certain level of ``smoothness'' in $v$.
Indeed, one can show that (uniformly in $u$),
$h\q(u,\cdot\w)\in L^1_\delta$ for some $\delta>0$, where
$L^1_\delta$ is the space of those $f\in L^1$ such that,
\begin{equation*}
\int \q|f\q(y-z\w) -f\q(y\w)\w| \: dy\lesssim \q|z\w|^{\delta}.
\end{equation*}

There is also ``cancellation'' in the operator
$\Phi_{x_0,j_0}^{\#} T_k^{*} T_j T_j^{*} T_k \q(\Phi_{x_0,j_0}^{\#}\w)^{-1}$
induced by the cancellation condition in $\eta_j,\eta_k$.
One can use this cancellation together with the smoothness
from \eqref{EqnSmoothAfterIter} to show that
\begin{equation*}
\LpOpN{\infty}{\q[\Phi_{x_0,j_0}^{\#} T_k^{*} T_j T_j^{*} T_k \q(\Phi_{x_0,j_0}^{\#}\w)^{-1} \w]^{n} \Phi_{x_0,j_0}^{\#} T_k^{*} T_j T_j^{*} T_k \q(\Phi_{x_0,j_0}^{\#}\w)^{-1}}\lesssim 2^{-\epsilon\q|j-k\w|},
\end{equation*}
which completes the proof of \eqref{EqnToShowAO6} and therefore
of \eqref{EqnToShowAO2}.

\begin{rmk}
%In some cases, one needs to use that $T_j$ and $T_k^{*}$ do not
%necessarily commute.
%This is taken advantage of in
%the proof of
%\eqref{EqnToShowAO6}, 
%where the lack of commutivity is exploited
%to use the fact that in
%\eqref{EqnThetaAsExp}
%one might need to take vector fields from both exponentials to 
%obtain a collection which satisfies H\"ormander's condition.
A significant issue dealt with is related to the fact that
$T_j$ and $T_k^{*}$ do not necessarily commute.  The proof
of \eqref{EqnToShowAO6} exploits this possibility, allowing
us to take vectors in \eqref{EqnThetaAsExp} from {\it both}
exponentials to obtain a collection that satisfies H\"ormander's
condition.
This allows one to exploit, for instance, the commutator
of two vector fields, one from each exponential.
This is in contrast to the case when $\nu=1$, where one needs
only look at one of the exponentials.  This accounts for one
of the differences between our cited work
and \cite{ChristNagelSteinWaingerSingularAndMaximalRadonTransforms}.
Of course, there are case where $T_j$ and $T_k^{*}$ (approximately) commute (for instance, certain cases when all the $X_\alpha$ commute), and our theorem
applies.  In these cases, one does not
need to exploit the commutator of two vector fields, one from
each of the exponentials in \eqref{EqnThetaAsExp}.
\end{rmk}

\section{The Littlewood-Paley theory}\label{SectionLittlewood}
In order to extend the results from $L^2$ to $L^p$ ($1<p<\infty$)
we use a Littlewood-Paley theory adapted to the operator $T$.
Recall the vector fields with formal degrees $\q(X_1, d_1\w),\ldots, \q(X_q,d_q\w)$
from Section \ref{SectionScaling} (these vector fields are nothing
more than an enumeration of the set $\sF$ from the finite-type condition).

For each $\mu$, $1\leq \mu\leq \nu$, consider those vector fields with formal
degrees $\q(X_j,d_j\w)$ such that $d_j$ is non-zero in only the $\mu$th
component.  
Think of these vector fields as vector fields with {\it one}-parameter
formal degrees, where the one parameter is the $\mu$th component
of $d_j$.
Enumerate these vector fields 
and one-parameter formal degrees
$\q(X_1^{\mu}, d_1^{\mu}\w),\ldots,\q(X_{q_\mu}^{\mu},d_{q_\mu}^\mu\w)$
where $d_j^\mu\in \q(0,\infty\w)$.

For $\q(t_1,\ldots, t_{q_\mu}\w)\in \R^{q_\mu}$ and $j\in \Z$, define 
$2^{j} \q(t_1,\ldots, t_{q_\mu}\w)= \q(2^{jd_1^\mu} t_1,\ldots, 2^{jd_{q_\mu}^\mu}t_{q_\mu}\w)$.
For a function $f\q(t\w)$ (with $t\in \R^{q_\mu}$), define
\begin{equation*}
\dil{f}{2^j} = 2^{j\q(d_1^\mu+\cdots+d_{q_\mu}^{\mu}\w)} f\q(2^j t\w).
\end{equation*}
Decompose
\begin{equation*}
\delta_0 = \sum_{j=0}^\infty \dil{\eta_j}{2^j}\q(t\w),
\end{equation*}
where $\q\{\eta_j\w\}\subset C_0^\infty\q(B^{q_\mu}\q(a\w)\w)$ is a bounded
set and $\int \eta_j=0$ if $j\ne 0$ (here, $a>0$ is a small number).

Fix $\psi_0\in C_0^\infty\q(\R^n\w)$ which is $1$ on a neighborhood of
$0$ and supported on a small neighborhood of $0$.  Define,
\begin{equation*}
D_j^\mu f\q(x\w) = \psi_0\q(x\w) \int_{\R^{q_\mu}} f\q(e^{t\cdot X^\mu} x\w) \psi_0\q(e^{t\cdot X^\mu}x\w)\dil{\eta_j}{2^j}\q(t\w) \: dt,
\end{equation*}
where $X^\mu=\q(X^\mu_1,\ldots, X_{q_\mu}^{\mu}\w)$.  Note that $\sum_{j=0}^\infty D_j^\mu = \psi_0^2$.

For $j=\q(j_1,\ldots, j_\nu\w)\in \N^\nu$, define,
\begin{equation*}
D_j = D_{j_1}^1\cdots D_{j_\nu}^\nu.
\end{equation*}
Note that $\sum_{j\in \N^\nu} D_j = \psi_0^{2\nu}$.

\begin{thm}[Littlewood-Paley square function]\label{ThmSquare}
For $f$ supported sufficiently close to $0\in \R^n$ and $1<p<\infty$,
\begin{equation*}
\LpN{p}{\q(\sum_{j\in \N^\nu} \q|D_j f\w|^2\w)^{\frac{1}{2}}}\approx \LpN{p}{f}.
\end{equation*}
\end{thm}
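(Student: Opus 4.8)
The plan is to establish the two-sided estimate by the standard Littlewood-Paley machinery adapted to the multi-parameter operator $D_j = D_{j_1}^1 \cdots D_{j_\nu}^\nu$. First I would record that each single-parameter family $\{D_j^\mu\}_{j\in\N}$ is itself a Littlewood-Paley decomposition: the operators $D_j^\mu$ are (up to the cutoff $\psi_0$) singular Radon operators associated to the one-parameter list of vector fields $(X_1^\mu,d_1^\mu),\ldots,(X_{q_\mu}^\mu,d_{q_\mu}^\mu)$, which satisfy H\"ormander's condition by the finite-type hypothesis (each leaf is the ambient space since we only took those $X_j$ with one-parameter degrees, together with their brackets, and these span). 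Hence each $D_j^\mu$ is covered by the single-parameter theory of \cite{ChristNagelSteinWaingerSingularAndMaximalRadonTransforms} — in particular the $\eta_j$ with $j\ne 0$ have mean zero, so the $D_j^\mu$ enjoy almost-orthogonality $\|(D_j^\mu)^* D_k^\mu\|, \|D_j^\mu (D_k^\mu)^*\| \lesssim 2^{-\epsilon|j-k|}$, proved exactly as in Section \ref{SectionL2} via the scaling map of Theorem \ref{ThmFrob} and the quantitative Frobenius theorem. From this one gets, for each fixed $\mu$, the one-parameter square-function equivalence $\|(\sum_{j}|D_j^\mu f|^2)^{1/2}\|_p \approx \|f\|_p$ for $1<p<\infty$, again by the argument in \cite{ChristNagelSteinWaingerSingularAndMaximalRadonTransforms} (Cotlar-Stein for $L^2$, then a vector-valued Calder\'on-Zygmund / Littlewood-Paley argument for general $p$).

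Next I would pass from the single-parameter estimates to the $\nu$-parameter product by iteration. The key structural observation is that the operators $D_{j_\mu}^\mu$ for distinct $\mu$ are "almost independent" in the Littlewood-Paley sense because they are built from vector fields whose formal degrees live in different coordinate directions. Concretely, for each fixed $\mu$ one has the vector-valued square-function bound: for any sequence of functions,
\begin{equation*}
\LpN{p}{\q(\sum_{j_\mu\in\N} \q|D_{j_\mu}^\mu g_{j_\mu}\w|^2\w)^{1/2}} \lesssim \LpN{p}{\q(\sum_{j_\mu}\q|g_{j_\mu}\w|^2\w)^{1/2}},
\end{equation*}
and likewise a reverse inequality with the partial sums $\sum_{j_\mu} D_{j_\mu}^\mu$ replacing $\psi_0^2$ in the limit. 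This $\ell^2$-valued bound follows from the scalar theory by the usual Calder\'on-Zygmund argument with Hilbert-space-valued kernels, since the kernels of $D_{j_\mu}^\mu$ satisfy the relevant size and smoothness estimates uniformly (as they do in \cite{ChristNagelSteinWaingerSingularAndMaximalRadonTransforms}). Applying this successively in $\mu=1,\dots,\nu$, with $g_{j_\mu}$ at stage $\mu$ being the $\ell^2(\N^{\nu-\mu})$-valued object $(D_{j_{\mu+1}}^{\mu+1}\cdots D_{j_\nu}^\nu f)_{j_{\mu+1},\dots,j_\nu}$, telescopes to give one direction of the theorem:
\begin{equation*}
\LpN{p}{\q(\sum_{j\in\N^\nu}\q|D_j f\w|^2\w)^{1/2}} \lesssim \LpN{p}{f}.
\end{equation*}
For the reverse inequality one uses the same iteration together with the identity $\sum_{j\in\N^\nu} D_j = \psi_0^{2\nu}$ and a duality/polarization argument: write $\langle \psi_0^{2\nu} f, g\rangle = \sum_j \langle D_{j_1}^1\cdots D_{j_\nu}^\nu f, g\rangle$, move half the operators onto $g$ using that $D_{j_\mu}^\mu$ differs from its adjoint only by a harmless smooth cutoff, apply Cauchy-Schwarz in $j$, and invoke the already-proven forward square-function bound for $f$ and its analogue for $g$ in $L^{p'}$; since $\psi_0^{2\nu}f = f$ for $f$ supported close to $0$, this yields $\|f\|_p \lesssim \|(\sum_j |D_j f|^2)^{1/2}\|_p$.

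The main obstacle I anticipate is the cross-parameter almost-orthogonality: controlling how $D_{j_\mu}^\mu$ and $D_{k_\nu}^\nu$ interact when $\mu \ne \nu$, since these do not commute in general (their defining vector fields can have nontrivial brackets, as emphasized in the remarks of Section \ref{SectionL2}). The resolution is precisely the scale-invariant Frobenius machinery of Section \ref{SectionScaling}: after conjugating by the appropriate $\Phi_{x_0,j_0}$, the composed operator $D_{j_\mu}^\mu (D_{k_\nu}^\nu)^*$ (and all the longer strings arising in the iteration) again takes the singular-Radon form \eqref{EqnTjTkPullback} with vector fields satisfying H\"ormander's condition uniformly, because every pure-power vector field appears unscaled in at least one exponential; the gain $2^{-\epsilon|j-k|}$ then comes from the size of the remaining scaling factors exactly as in \eqref{EqnToShowAO6}. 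Thus the only genuinely new input beyond \cite{ChristNagelSteinWaingerSingularAndMaximalRadonTransforms} is bookkeeping: verifying that at each stage of the $\nu$-fold iteration the relevant vector fields still satisfy H\"ormander's condition uniformly in all the scaling parameters, which is guaranteed by the finite-type and algebraic conditions via the argument already given in Section \ref{SectionScaling}. Once this is in hand, the passage to $L^p$ is the classical vector-valued Littlewood-Paley argument and presents no new difficulty.
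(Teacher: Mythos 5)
Your plan has the right general flavor---reduce the multi-parameter square function to the one-parameter theory, and use the quantitative Frobenius theorem (Theorem~\ref{ThmFrob}) to make that theory uniform across leaves---but your route differs from the paper's, and one step as you state it is wrong.

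The paper's argument for the $\lesssim$ direction is by randomization: after Khintchine, it suffices to bound $\sum_j \epsilon_j D_j$ on $L^p$ uniformly in the sign sequences $\epsilon^\mu$, and this operator \emph{factors} as $S_1\circ\cdots\circ S_\nu$ with $S_\mu=\sum_j \epsilon_j^\mu D_j^\mu$. Each $S_\mu$ is then a scalar Calder\'on--Zygmund operator adapted to one-parameter Carnot--Carath\'eodory balls, handled leaf-by-leaf via Theorem~\ref{ThmFrob}. You instead propose a Hilbert-space-valued iteration of one-parameter square-function bounds. Both are legitimate, but the randomization route is cleaner: it never needs vector-valued singular integral theory, it bypasses any need to relate $D^\mu$ for different $\mu$, and it reduces everything to a single scalar estimate per parameter. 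Your ``main obstacle'' paragraph about cross-parameter almost-orthogonality between $D_{j_\mu}^\mu$ and $D_{k_{\mu'}}^{\mu'}$ is a red herring for this theorem---neither the paper's factorization nor your own iteration requires controlling such interactions; you appear to be importing concerns from the $T_j^*T_k$ analysis of Section~\ref{SectionL2}, where they genuinely matter.

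The concrete error is your parenthetical claim that the vector fields $X_1^\mu,\ldots,X_{q_\mu}^\mu$ (plus brackets) ``span'' so each leaf is the ambient space. This is false in general, and the paper explicitly says so: ``It does not follow from our assumptions that $X_1^\mu,\ldots,X_{q_\mu}^\mu$ span the tangent space.'' What the finite-type hypothesis gives is that $[X_j^\mu,X_k^\mu]$ is a $C^\infty$-combination of the $X_l^\mu$, hence an involutive foliation into (possibly lower-dimensional) leaves; the whole point of invoking Theorem~\ref{ThmFrob} here is to make the Calder\'on--Zygmund argument uniform over those leaves. You do eventually invoke Frobenius, so your argument can be repaired, but as written the span claim would lead you to omit exactly the step the paper emphasizes. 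Finally, note the paper only proves the $\lesssim$ direction in this note (deferring $\gtrsim$ to the cited work); your duality sketch for $\gtrsim$ is too loose---``move half the operators onto $g$'' is not meaningful since $D_j^*=(D_{j_\nu}^\nu)^*\cdots(D_{j_1}^1)^*$ is not close to $D_j$ in any obvious sense, and one really needs a reproducing formula of the type in Theorem~\ref{ThmReproduce}.
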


For $M\in \N$, define
\begin{equation*}
U_M f\q(x\w) = \sum_{j\in \N^\nu} \sum_{\substack{k\in \N^\nu\\ \q|k-j\w|\leq M}} D_j D_k.
%, \quad R_M = \sum_{j\in \N^\nu}  \sum_{\substack{k\in \N^\nu\\ \q|k-j\w|> M}} D_j D_k.
\end{equation*}

\begin{thm}[Calder\'on reproducing formula]\label{ThmReproduce}
Fix $\psi_1\prec \psi_0$, $\psi_1\in C_0^\infty$.
For every $p$ ($1<p<\infty$) there exists an $M$, and an operator $V_M:L^p\rightarrow L^p$ such that $\psi_1 U_M V_M = \psi_1 = V_M U_M\psi_1 $.
\end{thm}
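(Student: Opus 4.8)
The plan is to exhibit $V_M$ as the inverse of $\mathrm{Id}-E_M$ for a suitable ``tail'' operator $E_M$, which I invert by a Neumann series once $\LpOpN{p}{E_M}$ has been made small. I would begin from the exact identity $\sum_{j\in\N^\nu}D_j=\psi_0^{2\nu}$ (as a multiplication operator), recorded in Section \ref{SectionLittlewood}, and compose it with itself to get $\sum_{j,k\in\N^\nu}D_jD_k=\psi_0^{4\nu}$. Setting
\[
E_M:=\sum_{\substack{j,k\in\N^\nu \\ |j-k|>M}}D_jD_k,
\]
we then have $U_M=\psi_0^{4\nu}-E_M$. Granting the main estimate $\LpOpN{p}{E_M}\to 0$ as $M\to\infty$ for each fixed $p\in(1,\infty)$ --- this is why $M$ is allowed to depend on $p$ --- I would fix $M=M(p)$ with $\LpOpN{p}{E_M}<1$ and define
\[
V_M:=(\mathrm{Id}-E_M)^{-1}=\sum_{l\geq 0}E_M^l,
\]
which converges in the operator norm on $L^p$, so that $V_M:L^p\to L^p$ is bounded. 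Since $\psi_1\prec\psi_0$ forces $\psi_0\equiv 1$ on a neighborhood of $\supp{\psi_1}$, one has $\psi_1\psi_0^{4\nu}=\psi_1=\psi_0^{4\nu}\psi_1$, hence
\[
\psi_1 U_M=\psi_1(\mathrm{Id}-E_M),\qquad U_M\psi_1=(\mathrm{Id}-E_M)\psi_1,
\]
and therefore $\psi_1 U_M V_M=\psi_1(\mathrm{Id}-E_M)(\mathrm{Id}-E_M)^{-1}=\psi_1$ and $V_M U_M\psi_1=(\mathrm{Id}-E_M)^{-1}(\mathrm{Id}-E_M)\psi_1=\psi_1$, which is the assertion.

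The substance is the estimate $\LpOpN{p}{E_M}\to 0$. I would reindex by the offset $d=k-j\in\Z^\nu$, writing $E_M=\sum_{|d|>M}A_d$ with $A_d:=\sum_{j,\,j+d\in\N^\nu}D_jD_{j+d}$, so that it is enough to prove the uniform bound $\LpOpN{p}{A_d}\lesssim 2^{-\epsilon|d|}$; then $\LpOpN{p}{E_M}\leq\sum_{|d|>M}\LpOpN{p}{A_d}\to 0$ since $\sum_{d\in\Z^\nu}2^{-\epsilon|d|}<\infty$. I would obtain the bound on $A_d$ by interpolating two estimates. The first is a uniform bound $\LpOpN{p}{A_d}\lesssim 1$: writing $A_d f=\sum_j D_j(D_{j+d}f)$ and applying the ``synthesis'' square-function inequality $\LpN{p}{\sum_j D_j g_j}\lesssim\LpN{p}{\q(\sum_j|g_j|^2\w)^{1/2}}$ --- the dual of the square-function estimate of Theorem \ref{ThmSquare} for the adjoint family $\{D_j^*\}$, whose members are operators of the same type --- together with Theorem \ref{ThmSquare} itself, gives $\LpN{p}{A_d f}\lesssim\LpN{p}{\q(\sum_j|D_{j+d}f|^2\w)^{1/2}}=\LpN{p}{\q(\sum_{j'}|D_{j'}f|^2\w)^{1/2}}\lesssim\LpN{p}{f}$. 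The second is an $L^2$ gain $\LtOpN{A_d}\lesssim 2^{-\epsilon|d|}$, which I would prove by a Cotlar--Stein argument applied to the family $\{D_jD_{j+d}\}_j$, combining the multi-parameter almost-orthogonality estimates $\LtOpN{D_kD_l}\lesssim 2^{-\epsilon|k-l|}$, $\LtOpN{D_k^*D_l}\lesssim 2^{-\epsilon|k-l|}$, and $\LtOpN{D_kD_l^*}\lesssim 2^{-\epsilon|k-l|}$: the scale mismatch between $D_j$ and $D_{j+d}$ supplies the gain $2^{-\epsilon|d|}$, while the orthogonality between distinct $D_j$'s makes the sum over $j$ converge. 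Interpolating the $L^2$ decay against the uniform $L^{p_0}$ bound, with $p$ strictly between $p_0$ and $2$, yields $\LpOpN{p}{A_d}\lesssim 2^{-\epsilon|d|}$ for every $p\in(1,\infty)$.

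The main obstacle is establishing the almost-orthogonality estimates $\LtOpN{D_kD_l},\LtOpN{D_k^*D_l},\LtOpN{D_kD_l^*}\lesssim 2^{-\epsilon|k-l|}$ --- together with the square-function estimate for the adjoint family --- in the multi-parameter setting: because the one-parameter families $D^1,\ldots,D^\nu$ do not commute, these require the quantitative Frobenius and scaling machinery of Section \ref{SectionScaling}, carried out much as in the proof of \eqref{EqnToShowAO2}, each $D_j$ being of the same general form as $T_j$ --- built from a flow of vector fields integrated against a kernel with cancellation. Once these ingredients are in hand, the Cotlar--Stein summation, the interpolation, the Neumann-series inversion, and the verification of the two operator identities are all routine.
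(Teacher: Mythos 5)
Your proposal is correct and follows the same route as the paper's own (very terse) sketch: write $\psi_0^{4\nu}=U_M+R_M$, show $\LpOpN{p}{R_M}\to 0$ as $M\to\infty$, and invert $\mathrm{Id}-R_M$ by a Neumann series, using $\psi_1\psi_0^{4\nu}=\psi_1$ to pass from $U_M$ to $\mathrm{Id}-R_M$. You have filled in the details the paper leaves implicit---the reindexing of $R_M$ by the offset $d=k-j$, the two interpolation endpoints (a uniform $L^{p_0}$ bound via Theorem \ref{ThmSquare} and its dual applied to the adjoint family $\{D_j^*\}$, and an $L^2$ gain $\LtOpN{A_d}\lesssim 2^{-\epsilon|d|}$ via Cotlar--Stein)---and you correctly identify that the real work hidden behind ``the ideas in Section \ref{SectionL2}'' is the almost-orthogonality of the $D_j$'s, which requires the scaling/Frobenius machinery since the one-parameter families $D^\mu$ need not commute. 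No gap.
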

\begin{proof}[Proof sketch]
Note that
$\psi_0^{4\nu} = U_M +R_M$,
where $$R_M=\sum_{j\in \N^\nu} \sum_{\substack{k\in \N^\nu \\ \q|k-j\w|>M}} D_j D_k.$$
For each fixed $p$, $1<p<\infty$, one has,
\begin{equation}\label{EqnRMSmall}
\lim_{M\rightarrow \infty} \LpOpN{p}{R_M}=0.
\end{equation}
Taking $M=M\q(p\w)$ such that $\LpOpN{p}{R_M}$ is sufficiently small,
$V_M$ can be constructed using a Neumann series.
For $p=2$, \eqref{EqnRMSmall} follows from the ideas
in Section \ref{SectionL2}.  For other $p$, an interpolation argument is used.
\end{proof}

The use of Theorem \ref{ThmReproduce} is that instead of proving the
$L^p$ boundedness of $T$, it suffices to prove the $L^p$ boundedness
of $T U_M$ for some $M=M\q(p\w)$ (here we have used $T\psi_1=T$ if
the support of $T$ is chosen to be on a sufficiently small neighborhood of $0$--where we have taken $\psi_1\equiv 1$ on a neighborhood of $0$).

We close this section by outlining the proof of the $\lesssim$
part of Theorem \ref{ThmSquare}.  We defer further discussion of all
other results in this section to
\cite{SteinStreetMultiParameterSingRadonLp}.

By a well-known reduction, to prove the $\lesssim$ part of
Theorem \ref{ThmSquare} it suffices to show that for
any $\nu$ sequences $\epsilon_j^\mu\in \q\{-1,1\w\}$, $j\in \N$, $1\leq \mu\leq \nu$, we have
\begin{equation*}
\sum_{j_1,\ldots, j_\nu=0}^\infty \epsilon_{j_1}^1 D_{j_1}^1 \cdots \epsilon_{j_\nu}^\nu D_{j_\nu}^\nu,
\end{equation*}
is bounded on $L^p$ (with bound independent of the choice of the sequences $\epsilon^\mu$).
As a consequence, it suffices to show for each $\mu$,
\begin{equation*}
S_\mu := \sum_{j=0}^\infty \epsilon_{j}^\mu D_j^\mu
\end{equation*}
is bounded on $L^p$ (with bound independent of the sequence).

If the vector fields $X_1^\mu,\ldots, X_{q_\mu}^{\mu}$ span
the tangent space at each point, then it is easy to see
that $S_\mu$ is a Calder\'on-Zygmund singular integral operator
associated to single parameter Carnot-Carath\'eodory balls
generated by $\q(X_1^\mu, d_1^\mu\w),\ldots,\q(X_{q_\mu}^\mu, d_{q_\mu}^\mu\w)$
(which, in turn, endow the ambient space with the structure of a space
of homogeneous type--see \cite{NagelSteinWaingerBallsAndMetricsDefinedByVectorFields}).  The $L^p$ boundedness of $S_\mu$ then
follows from classical results (see, e.g., \cite{SteinHarmonicAnalysis}).

It does not follow from our assumptions that
$X_1^\mu,\ldots, X_{q_\mu}^{\mu}$ span the tangent space.
However, it {\it does} follow from our assumptions
that $\q[X_j^\mu, X_k^\mu\w]$ can be written as a $C^\infty$
linear combination of $X_1^\mu,\ldots, X_{q_\mu}^\mu$ and
therefore the classical theorem of Frobenius applies
to foliate the ambient space into leaves; $X_1^\mu, \ldots, X_{q_\mu}^\mu$
spanning the tangent space to each leaf.
One would then like to apply the above argument (seeing $S_\mu$
as a Calder\'on-Zygmund singular integral operator)
to each leaf (uniformly in the leaf), and use this to
prove the $L^p$ boundedness of $S_\mu$ on the entire space.
To do this, one needs appropriately uniform control of the coordinate
charts defining the leaves.  Theorem \ref{ThmFrob}
gives just such control, and this argument then yields the $L^p$
boundedness of $S_\mu$.

Along a similar line,
let $\sigma\in C_0^\infty\q(\R^{q_\mu}\w)$ be a
non-negative function which is $1$ on a neighborhood
 of $0$, and supported on a small neighborhood of $0$.
Define,
%\begin{equation*}
%A_j^\mu f\q(x\w) = \psi_0\q(x\w) \int f\q(e^{t\cdot X^\mu} x\w) \psi_0\q(e^{t\cdot X^\mu} x\w) \dil{\sigma}{2^j}\q(t\w)\: dt,
%\end{equation*}
\begin{equation*}
\begin{split}
A_j^\mu f\q(x\w) &= 
\psi_0\q(x\w) \int f\q(e^{t\cdot X^\mu} x\w) \psi_0\q(e^{t\cdot X^\mu} x\w) \dil{\sigma}{2^j}\q(t\w)\: dt\\
&=\psi_0\q(x\w) \int f\q(e^{\q(2^{-j}t\w)\cdot X^\mu} x\w) \psi_0\q(e^{\q(2^{-j}t\w)\cdot X^\mu} x\w) \sigma\q(t\w)\: dt,
\end{split}
\end{equation*}
and define
\begin{equation*}
\sM_\mu f\q(x\w) = \sup_{j\in \N} A_j^\mu \q|f\w| \q(x\w).
\end{equation*}
By a similar argument using the Frobenius theorem
to reduce the question to the classical theory of spaces of
homogeneous type,
we have $\LpN{p}{\sM_\mu f}\lesssim \LpN{p}{f}$ for $1<p\leq \infty$.

\section{Auxiliary operators}
To prove Theorem \ref{ThmMainThm}, we need to use several
auxiliary operators.
Recall, we have decomposed $T=\sum_{j\in \N^{\nu}} T_j$,
we have defined a Littlewood-Paley square
function in terms of operators $D_j$ in Section
\ref{SectionLittlewood}, and we have introduced some
``averaging'' operators $A_j^\mu$ at the end of
Section \ref{SectionLittlewood}.
In what follows, we will have many operators with a subscript
with values in $\N^\nu$ (e.g., $T_j$, $j\in \N^\nu$).  For such operators
we define $T_j$ for $j\in \Z^\nu\setminus \N^\nu$ to be $0$.

For $k_1,k_2\in \Z^\nu$, we define the operator $\sT_{k_1,k_2}$
on sequences of measurable functions $\q\{f_j\w\}_{j\in \N^\nu}$ by
\begin{equation*}
\sT_{k_1,k_2} \q\{f_j\w\}_{j\in \N^\nu} = \q\{ D_j T_{j+k_1} D_{j+k_2} f_j\w\}_{j\in \N^\nu}.
\end{equation*}
The use of $\sT_{k_1,k_2}$ comes from the following proposition:
\begin{prop}\label{PropToShowVectT}
Fix $p$, $1<p<\infty$.  If there exists $\epsilon>0$ such that
\begin{equation*}
\LplqOpN{p}{2}{\sT_{k_1,k_2}}\lesssim 2^{-\epsilon\q(\q|k_1\w|+\q|k_2\w|\w)},
\end{equation*}
then $T:L^p\rightarrow L^p$.  Here, $L^p\q(\ell^q\q(\N^\nu\w)\w)$ denotes the
$L^p$ space of functions on $\R^n$ taking values in the Banach
space $\ell^q\q(\N^\nu\w)$.
\end{prop}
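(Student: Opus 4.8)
The plan is to reconstruct $T$ from the operators $\sT_{k_1,k_2}$ using the Littlewood-Paley machinery of Section \ref{SectionLittlewood}, and then sum the hypothesized geometric decay over $k_1,k_2\in\Z^\nu$. First I would recall that, by Theorem \ref{ThmReproduce}, it suffices to prove the $L^p$ boundedness of $TU_M$ for a suitable $M=M(p)$, since $T\psi_1=T$ when the support of $T$ is small enough. Expanding $U_M=\sum_{j}\sum_{|k-j|\le M}D_jD_k$ and inserting a further copy of the reproducing identity on the left ($\psi_1=\psi_1 U_{M}V_M$, or more simply using that $T=T\psi_0^{2\nu}$ up to harmless cutoffs so that $T=\sum_{j'}TD_{j'}$ after a Littlewood-Paley decomposition on the range side), one writes $T$ as a double sum over Littlewood-Paley pieces: schematically $T=\sum_{j',j''}D_{j'}TD_{j''}$ plus an error controlled as in \eqref{EqnRMSmall}. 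Reindexing $j'=j$, $j''=j+k_2$, and the range index $j$ shifted by $k_1$ against the $T$-subscript, the contribution is exactly $\sum_{k_1,k_2}$ of the diagonal-in-$j$ operator whose $j$-th block is $D_jT_{j+k_1}D_{j+k_2}$, i.e. built from $\sT_{k_1,k_2}$.

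The second step is to pass from the vector-valued bound on $\sT_{k_1,k_2}$ to a scalar bound on $\sum_j D_jT_{j+k_1}D_{j+k_2}$. Here one uses the Littlewood-Paley square function estimate, Theorem \ref{ThmSquare}: the map $f\mapsto\{D_{j+k_2}f\}_j$ is bounded $L^p\to L^p(\ell^2(\N^\nu))$ (uniformly in $k_2$, since it is just a reindexed sub-collection of the full square function, and the reverse square-function inequality gives the dual bound), and likewise $\{g_j\}_j\mapsto\sum_j D_jg_j$ is bounded $L^p(\ell^2(\N^\nu))\to L^p$. Composing,
\begin{equation*}
\LpOpN{p}{\sum_{j\in\N^\nu} D_j T_{j+k_1}D_{j+k_2}}\lesssim \LplqOpN{p}{2}{\sT_{k_1,k_2}}\lesssim 2^{-\epsilon(|k_1|+|k_2|)}.
\end{equation*}
Summing this over $k_1,k_2\in\Z^\nu$ gives a convergent series (the geometric decay beats the polynomial growth of the number of lattice points on each shell), hence $\LpOpN{p}{TU_M}\lesssim 1$, and Theorem \ref{ThmReproduce} then yields $T:L^p\to L^p$. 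One should also separately note the error term $R_M$ (or the tail of the reproducing formula) is handled by choosing $M=M(p)$ large via \eqref{EqnRMSmall}, so it contributes an operator of small norm and does not affect boundedness.

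The main obstacle is the bookkeeping in the first step: exhibiting $TU_M$ \emph{exactly} as $\sum_{k_1,k_2}\sum_j D_jT_{j+k_1}D_{j+k_2}$ (up to a negligible remainder) requires care, because $T$ does not literally commute with the $D_j$'s and one must insert the reproducing formula on \emph{both} sides, track the cutoffs $\psi_0,\psi_1$, and verify that the off-diagonal terms in $U_M$ (those with $0<|k-j|\le M$) are absorbed into the finitely many shifts $k_2$ with $|k_2|\le 2M$. The convention $T_j=0$ for $j\notin\N^\nu$ is what makes the reindexing clean, since blocks that would reference a negative index simply vanish. Granting these reductions, everything else is the routine square-function sandwich plus summation of a geometric series.
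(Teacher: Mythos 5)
Your overall strategy matches the paper's: reduce to $TU_M$ via Theorem~\ref{ThmReproduce}, organize the pieces into the diagonal blocks $D_jT_{j+k_1}D_{j+k_2}$, invoke the hypothesis, and sum the geometric series using the square function. However, the mechanism by which the outer (range-side) $D_j$ appears is misidentified, and this propagates into an incorrect intermediate bound.

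The range-side $D_j$ does \emph{not} come from inserting a second copy of the reproducing formula on the left of $T$, nor from any algebraic identity of the form $T=\sum_{j'}D_{j'}T(\cdots)$ — no such identity is used or needed. It enters through the square-function \emph{norm equivalence} of Theorem~\ref{ThmSquare} applied to $g=TU_Mf$:
\begin{equation*}
\LpN{p}{TU_Mf}\approx \LpN{p}{\Big(\sum_{j}\big|D_j\,TU_Mf\big|^2\Big)^{1/2}}.
\end{equation*}
One then expands $T=\sum_{j_1}T_{j_1}$ and $U_M=\sum_{j_2,|l|\le M}D_{j_2}D_{j_2+l}$ \emph{inside} the square function, reindexes $j_1=j+k_1$, $j_2=j+k_2$, and applies the triangle inequality in $L^p(\ell^2)$ to pull out $\sum_{k_1,k_2,l}$. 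The $j$-indexed sequence sitting in the $\ell^2$ slot is then exactly $\sT_{k_1,k_2}\{D_{j+k_2+l}f\}_j$, whose norm the hypothesis controls by $2^{-\epsilon(|k_1|+|k_2|)}\LplqN{p}{2}{\{D_{j+k_2+l}f\}_j}\lesssim 2^{-\epsilon(|k_1|+|k_2|)}\LpN{p}{f}$, the last step again by Theorem~\ref{ThmSquare}.

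Your claimed composition bound
\begin{equation*}
\LpOpN{p}{\textstyle\sum_{j}D_jT_{j+k_1}D_{j+k_2}}\lesssim \LplqOpN{p}{2}{\sT_{k_1,k_2}}
\end{equation*}
does not follow from the sandwich you describe: feeding $f\mapsto\{D_{j+k_2}f\}_j$ into $\sT_{k_1,k_2}$ produces $\{D_jT_{j+k_1}D_{j+k_2}^2f\}_j$ (a spurious extra $D_{j+k_2}$), while the map $g\mapsto\{g\}_j$ that would avoid this is unbounded from $L^p$ into $L^p(\ell^2)$. The factor $D_{j+k_2+l}$ coming from the expansion of $U_M$ is precisely what fills the $\ell^2$ slot of $\sT_{k_1,k_2}$ and makes the chain close; dropping it, as your reindexed formula $\sum_jD_jT_{j+k_1}D_{j+k_2}$ does, is where the argument breaks. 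Once you restore that factor and work at the level of $L^p(\ell^2)$ norms (rather than trying to decompose $TU_M$ into a sum of scalar operators), your plan becomes the paper's proof.
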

\begin{proof}
Take $M=M\q(p\w)$ as in Theorem \ref{ThmReproduce}, so that it
suffices to prove $TU_M:L^p\rightarrow L^p$.
Consider, using Theorem \ref{ThmSquare} and the triangle inequality,
we have
\begin{equation*}
\begin{split}
\LpN{p}{ T U_M f} &\approx \LpN{p}{\q(\sum_{j\in \N^\nu} \q|D_j T U_M f\w|^2\w)^{\frac{1}{2}}}\\
&= \LpN{p}{\q(  \sum_{j\in \N^\nu} \q|\sum_{\substack{j_1,j_2\in \N^\nu\\ \q|l\w|\leq M}} D_j T_{j_1} D_{j_2} D_{j_2+l} f\w|^2         \w)^{\frac{1}{2}}} \\
&= \LpN{p}{\q(  \sum_{j\in \N^\nu} \q|\sum_{\substack{k_1,k_2\in \Z^\nu\\ \q|l\w|\leq M}} D_j T_{j+k_1} D_{j+k_2} D_{j+k_2+l} f\w|^2         \w)^{\frac{1}{2}}} \\
&\leq \sum_{\substack{k_1,k_2\in \Z^\nu\\ \q|l\w|\leq M} }\LpN{p}{\q(  \sum_{j\in \N^\nu} \q|D_j T_{j+k_1} D_{j+k_2} D_{j+k_2+l} f\w|^2         \w)^{\frac{1}{2}}} \\
&=  \sum_{\substack{k_1,k_2\in \Z^\nu\\ \q|l\w|\leq M} } \LplqN{p}{2}{\sT_{k_1,k_2} \q\{D_{j+k_2+l} f\w\}_{j\in \N^\nu}}\\
&\lesssim \sum_{\substack{k_1,k_2\in \Z^\nu\\ \q|l\w|\leq M} } 2^{-\epsilon \q(\q|k_1\w| +\q|k_2\w|\w)} \LplqN{p}{2}{\q\{D_{j+k_2+l}f\w\}_{j\in \N^\nu}}\\
%&\leq \sum_{\substack{k_1,k_2\in \Z^\nu\\ \q|l\w|\leq M} } 2^{-\epsilon \q(\q|k_1\w| +\q|k_2\w|\w)} \LplqN{p}{2}{\q\{D_{j}f\w\}_{j\in \N^\nu}}\\
&\lesssim \LpN{p}{\q(\sum_{j\in \N^\nu} \q|D_j f\w|^2\w)^{\frac{1}{2}}}\\
&\lesssim \LpN{p}{f},
\end{split}
\end{equation*}
which completes the proof.
\end{proof}

Our proof of the $L^p$ boundedness of $T$ will proceed by applying
Proposition \ref{PropToShowVectT}.
A similar idea is used for $\sM$, however $\sM$ does not have the inherent
cancellation that $T$ does.  Because of this, we introduce
this cancellation in an {\it ad hoc} way.
To do this we proceed by induction on $\nu$ (the base case
$\nu=0$ will be trivial--this will be explained more in what follows).  We will construct an operator $B_j$ ($j\in \N^\nu$)
which has cancellation similar to $T_j$ and which (under our inductive
hypothesis) satisfies $\LpN{p}{\sup_j \q|B_j f\w|}\lesssim \LpN{p}{f}$ ($1<p<\infty$) if and only if $\LpN{p}{\sM f}\lesssim \LpN{p}{f}$.

Let $\Ninf =\N\cup\q\{\infty\w\}$.
For a subset $E\subseteq \q\{1,\ldots, \nu\w\}$ and $j=\q(j_1,\ldots, j_\nu\w)\in \N^\nu$,
define $j_E\in \Ninf^\nu$ to be equal to $j_\mu$ for those components $\mu\in E$
and equal to $\infty$ in the rest of the components.
For $t \in \R^N$, we define $2^{-j_E} t$ in the usual way, with the convention
that $2^{-\infty}=0$; thus $2^{-j_E}t$ is zero in every coordinate
corresponding to $t_\mu$ where $\mu\in E^c$.
We may think of these dilations as $\q|E\w|$-parameter dilations
acting on the lower dimensional space consisting of those
coordinates $t_\mu$ with $\mu\in E$.  Notice that
$j_\emptyset = \q(\infty,\cdots,\infty\w)$ and $j_{\q\{1,\ldots, \nu\w\}}=j$.

Let $\sigma_0\in C_0^\infty\q(\R\w)$ be a non-negative function supported on a small neighborhood
of $0$, with $\sigma_0\geq 1$ on a neighborhood of $0$.
For $t=\q(t_1,\ldots, t_m\w)\in \R^m$ (for any $m$), we define
$\sigma\q(t\w) = \sigma_0\q(t_1\w)\cdots \sigma_0\q(t_m\w)$.
We use this $\sigma$ to define the operators $A_j^\mu$ as at
the end of Section \ref{SectionLittlewood}.
Let $\psi_1,\psi_2\in C_0^\infty\q(\R^n\w)$, with $\psi_1,\psi_2\geq 0$, $\psi_2\prec \psi_1\prec \psi_0$,
and $\psi_1,\psi_2\geq 1$ on a neighborhood of $0$.
We define, for $j\in \Ninf^\nu$,
\begin{equation*}
M_j f\q(x\w) =\psi_2\q(x\w) \int f\q(\gamma_{2^{-j}t}\q(x\w)\w) \psi_2\q(\gamma_{2^{-j}t}\q(x\w)\w) \sigma\q(t\w) \: dt.
\end{equation*}
Notice,
\begin{equation}\label{EqnBaseCase}
M_{\q(\infty,\cdots,\infty\w)} f\q(x\w) = M_{j_\emptyset} f\q(x\w) = \psi_2^2\q(x\w) \q[\int \sigma\q(t\w)\: dt\w]f\q(x\w).
\end{equation}
It is immediate to see
\begin{equation*}
\sM f\q(x\w) \lesssim \sup_{j\in \N^\nu} M_j \q|f\w| \q(x\w).
\end{equation*}
Thus, to prove $\sM$ is bounded on $L^p$ ($1<p\leq \infty$), it suffices to show
\begin{equation}\label{EqnToShowsM}
\LpN{p}{\sup_{j\in \N^\nu} \q|M_j f\w|} \lesssim \LpN{p}{f},
\end{equation}
for $1<p\leq \infty$.

Recall the operators $A_j^\mu$ from Section \ref{SectionLittlewood} (replace
$\psi_0$ with $\psi_1$ in the definition of $A_j^\mu$).
For $j=\q(j_1,\ldots, j_\nu\w)\in \Ninf^\nu$, define
\begin{equation*}
A_j = A_{j_1}^1 \cdots A_{j_\nu}^\nu.
\end{equation*}
Note that
\begin{equation*}
A_{\q(\infty,\cdots,\infty\w)} = \q[\int \sigma\q(t\w) \: dt\w]^\nu \psi_1^{2\nu}.
\end{equation*}
Thus, to prove \eqref{EqnToShowsM} it suffices to show
\begin{equation}\label{EqnToShowsM2}
\LpN{p}{\sup_{j\in \N^\nu} \q|A_{j_\emptyset}M_{j_{\q\{1,\ldots, \nu\w\}}} f\w|} \lesssim \LpN{p}{f}.
\end{equation}

It is easy to see that $M_{j_E}$ is of the same form as $M_j$ but
with $\nu$ replaced by $\q|E\w|$.  Thus, we may assume (for induction)
that $\LpN{p}{\sup_{j\in \N^\nu} \q|M_{j_E} f\w|}\lesssim \LpN{p}{f}$
for $1<p\leq \infty$ and $E\subsetneq \q\{1,\ldots, \nu\w\}$.
In light of \eqref{EqnBaseCase}, the base case $E=\emptyset$
is trivial.

Fix $p$ ($1<p\leq 2$).\footnote{The maximal result for $p>2$ follows from
the result for $p=2$ and
interpolation with the trivial result $p=\infty$.}
As discussed at the end of Section \ref{SectionLittlewood},
\begin{equation*}
\LpN{p}{\sup_{j\in \Ninf^{\nu}} \q|A_j f\w|}\lesssim \LpN{p}{f}.
\end{equation*}
It follows that, for $E\subsetneq \q\{1,\ldots, \nu\w\}$,
\begin{equation*}
\LpN{p}{\sup_{j\in \N^\nu} \q|A_{j_{E^c}} M_{j_E} f\w|} \lesssim \LpN{p}{f}.
\end{equation*}
Define the operator,
\begin{equation*}
B_j = \sum_{E\subseteq \q\{1,\ldots, \nu\w\}} \q(-1\w)^{\q|E\w|} A_{j_{E^c}} M_{j_E}.
\end{equation*}

By the above discussion, showing \eqref{EqnToShowsM} is equivalent to showing
\begin{equation*}
\LpN{p}{\sup_{j\in \N^\nu} \q|B_j f\w|} \lesssim \LpN{p}{f}.
\end{equation*}
In fact, we show the stronger result:
\begin{equation}\label{EqnToShowBj}
\LpN{p}{\q(\sum_{j\in \N^\nu} \q|B_j f\w|^2\w)^{\frac{1}{2}}}\lesssim \LpN{p}{f}.
\end{equation}
For $k\in \Z^\nu$, define the vector valued operator
\begin{equation*}
\sB_k \q\{f_j\w\}_{j\in \N^\nu} = \q\{B_j D_{j+k} f_j\w\}_{j\in \N^\nu}.
\end{equation*}
From the above discussion, and a proof along the lines of Proposition
\ref{PropToShowVectT}, we have the following.
\begin{prop}\label{PropToShowVectM}
Fix $p$, $1<p<\infty$.  If there exists $\epsilon>0$ such that
\begin{equation*}
\LplqOpN{p}{2}{\sB_k}\lesssim 2^{-\epsilon\q|k\w|},
\end{equation*}
then $\sM$ is bounded on $L^p$.
\end{prop}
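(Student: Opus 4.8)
The plan is to mimic, almost verbatim, the proof of Proposition \ref{PropToShowVectT}, with the square function $\q\{B_j f\w\}_{j\in\N^\nu}$ now playing the role that $\q\{D_j TU_M f\w\}_{j\in\N^\nu}$ played there. Recall from the discussion preceding the statement that, under the inductive hypothesis on $\nu$, it suffices to establish the stronger estimate \eqref{EqnToShowBj}, i.e.
\[
\LpN{p}{\q(\sum_{j\in\N^\nu}\q|B_j f\w|^2\w)^{\frac12}}\lesssim\LpN{p}{f};
\]
moreover, since every operator in sight has small support, we may assume $f$ is supported in a fixed small neighborhood of $0$, so that $\psi_1 f=f$.

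The key step is to insert the Calder\'on reproducing formula, just as $T=T\psi_1=TU_M V_M$ was used in Proposition \ref{PropToShowVectT}. Because $B_j$ depends on $f$ only through its values in a fixed small neighborhood of $0$ on which $\psi_1\equiv1$ (which follows from the small supports $\psi_2\prec\psi_1\prec\psi_0$ together with the fact that $\gamma_{2^{-j}t}$ and $e^{t\cdot X^\mu}$ are near the identity for small $t$), we have $B_j=B_j\psi_1$. Taking $M=M\q(p\w)$ as in Theorem \ref{ThmReproduce} and using $\psi_1 U_M V_M=\psi_1$, this gives $B_j=B_j U_M V_M$. Writing $g=V_M f$, so that $\LpN{p}{g}\lesssim\LpN{p}{f}$, and expanding $U_M=\sum_{m\in\N^\nu}\sum_{\q|l\w|\leq M}D_m D_{m+l}$, the substitution $m=j+k$ (with the convention $D_m=0$ for $m\notin\N^\nu$) yields
\[
B_j f \;=\; \sum_{\substack{k\in\Z^\nu\\ \q|l\w|\leq M}} B_j D_{j+k} D_{j+k+l}\,g .
\]
The point of this rearrangement is that $B_j D_{j+k}$ is exactly the operator occupying the $j$-th slot of $\sB_k$, while the remaining factor $D_{j+k+l}g$ produces a sequence controlled by a Littlewood--Paley square function: $\LplqN{p}{2}{\q\{D_{j+k+l}g\w\}_{j\in\N^\nu}}\leq\LpN{p}{\q(\sum_{j\in\N^\nu}\q|D_j g\w|^2\w)^{\frac12}}\lesssim\LpN{p}{g}$, by the ``$\lesssim$'' half of Theorem \ref{ThmSquare} (applied, if needed, to a harmless compactly supported cutoff of $g$).

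From here the estimate is routine. By the triangle inequality in $L^p\q(\ell^2\q(\N^\nu\w)\w)$ followed by the hypothesis $\LplqOpN{p}{2}{\sB_k}\lesssim2^{-\epsilon\q|k\w|}$,
\[
\LpN{p}{\q(\sum_{j\in\N^\nu}\q|B_j f\w|^2\w)^{\frac12}}
\leq\sum_{\substack{k\in\Z^\nu\\ \q|l\w|\leq M}}\LplqN{p}{2}{\sB_k\q\{D_{j+k+l}g\w\}_{j\in\N^\nu}}
\lesssim\sum_{\substack{k\in\Z^\nu\\ \q|l\w|\leq M}}2^{-\epsilon\q|k\w|}\LpN{p}{g}
\lesssim\LpN{p}{f},
\]
since $\sum_{k\in\Z^\nu}2^{-\epsilon\q|k\w|}<\infty$ and, for the fixed $M$, there are only finitely many $l$ with $\q|l\w|\leq M$. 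This establishes \eqref{EqnToShowBj}, hence \eqref{EqnToShowsM}, hence the $L^p$ boundedness of $\sM$. I do not expect a genuine obstacle in this proposition: the only point that needs care is checking $B_j=B_j\psi_1$ (so that the reproducing formula may legitimately be inserted), while the substantive content (the decay $\LplqOpN{p}{2}{\sB_k}\lesssim2^{-\epsilon\q|k\w|}$, which is proved by adapting the scaling and Frobenius-theorem machinery of Sections \ref{SectionScaling}--\ref{SectionL2} to the operators $B_j$) is precisely the hypothesis.
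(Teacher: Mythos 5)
Your proposal is correct and is precisely the adaptation of the proof of Proposition~\ref{PropToShowVectT} that the paper intends when it writes ``a proof along the lines of Proposition~\ref{PropToShowVectT}'': reduce to~\eqref{EqnToShowBj} via the inductive discussion, insert $B_j = B_j\psi_1 = B_jU_MV_M$ (valid since $\psi_2\prec\psi_1$ forces $\psi_1\equiv1$ on the support of the $\psi_2$ cutoffs that $B_j$ actually sees), reindex $U_M=\sum D_{j+k}D_{j+k+l}$, apply the triangle inequality in $L^p(\ell^2)$ together with the hypothesized decay of $\LplqOpN{p}{2}{\sB_k}$, and close with Theorem~\ref{ThmSquare}.
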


\section{Completion of the proof}
In this section, we outline the remaining points
in the proof of Theorem \ref{ThmMainThm}.
Since the $L^2$ adjoint of $T$ is essentially of the same
form as $T$ (with $\gamma_t\q(x\w)$ replaced by $\gamma_t^{-1}\q(x\w)$)
and since $\sM$ is trivially bounded on $L^\infty$, it suffices
to verify Theorem \ref{ThmMainThm} for $1<p\leq 2$.
In light of Propositions \ref{PropToShowVectT} and \ref{PropToShowVectM}
we wish to show that for every $p$, $1<p\leq 2$, there exists
$\epsilon=\epsilon\q(p\w)>0$ such that
\begin{equation}\label{EqnToShowVectBound}
\LplqOpN{p}{2}{\sT_{k_1,k_2}}\lesssim 2^{-\epsilon\q(\q|k_1\w|+\q|k_2\w|\w)}, \quad \LplqOpN{p}{2}{\sB_{k}}\lesssim 2^{-\epsilon \q|k\w|}.
\end{equation}

\begin{step}
Using the methods of Section \ref{SectionL2}, we show
\begin{equation*}
\LpOpN{2}{D_jT_{j+k_1}D_{j+k_2}}\lesssim 2^{-\epsilon_2 \q(\q|k_1\w|+\q|k_2\w|\w)}, \quad \LpOpN{2}{B_j D_{j+k}} \lesssim 2^{-\epsilon_2\q|k\w|},
\end{equation*}
for some $\epsilon_2>0$.
As a consequence, we obtain
\begin{equation}\label{EqnToShowVectBoundL2}
\LplqOpN{2}{2}{\sT_{k_1,k_2}}\lesssim 2^{-\epsilon_2\q(\q|k_1\w|+\q|k_2\w|\w)}, \quad \LplqOpN{2}{2}{\sB_{k}}\lesssim 2^{-\epsilon_2 \q|k\w|}.
\end{equation}
It is easy to see that
\begin{equation}\label{EqnVectBoundL1}
\LplqOpN{1}{1}{\sT_{k_1,k_2}}\lesssim 1, \quad \LplqOpN{1}{1}{\sB_{k}}\lesssim 1.
\end{equation}
Interpolating \eqref{EqnToShowVectBoundL2} and \eqref{EqnVectBoundL1}
we obtain for $1<p\leq 2$,
\begin{equation}\label{EqnToVectBoundLp}
\LplqOpN{p}{p}{\sT_{k_1,k_2}}\lesssim 2^{-\epsilon_p\q(\q|k_1\w|+\q|k_2\w|\w)}, \quad \LplqOpN{p}{p}{\sB_{k}}\lesssim 2^{-\epsilon_p \q|k\w|},
\end{equation}
where $\epsilon_p>0$.
\end{step}

\begin{step}
\eqref{EqnToShowVectBoundL2} shows that $\sM$ is bounded on $L^2$.
A bootstrapping argument like the one from \cite{NagelSteinWaingerDifferentiationInLacunaryDirections} then can
be used (in conjunction with \eqref{EqnToVectBoundLp}) to show that $\sM$ is bounded on $L^p$ ($1<p\leq 2$), thereby
completing the proof for $\sM$.
\end{step}

\begin{step}
The $L^p$ boundedness of $\sM$ can be used to show that
\begin{equation*}
\LplqOpN{p}{\infty}{\sT_{k_1,k_2}} \lesssim 1.
\end{equation*}
Interpolating this with \eqref{EqnToVectBoundLp} yields \eqref{EqnToShowVectBound}
and completes the proof.
\end{step}

\section{More general results}\label{SectionGeneral}
The discussion in this note has been restricted to $\gamma$
of the form
\begin{equation*}
\gamma_t\q(x\w) = \exp\q(\sum_{0<\q|\alpha\w|\leq L} t^\alpha X_\alpha\w)x,
\end{equation*}
an exponential of a {\it finite} sum of vector fields.  However,
%the papers
%\cite{StreetMultiParameterSingRadonLt,
%SteinStreetMultiParameterSingRadonLp,
%StreetMultiParameterSingRadonAnal}
%are 
our cited work is
not limited to such $\gamma$.%\footnote{In particular,
%the single parameter theory developed by
%\cite{ChristNagelSteinWaingerSingularAndMaximalRadonTransforms}
%is not limited to such $\gamma$, and this single parameter
%theory is a special case of the theory
%developed in
%\cite{StreetMultiParameterSingRadonLt,
%SteinStreetMultiParameterSingRadonLp,
%StreetMultiParameterSingRadonAnal}.}

If $\gamma$ were of the form
\begin{equation}\label{EqnGenExp}
\gamma_t\q(x\w) = \exp\q( A\q(t\w)\w) x,
\end{equation}
where $A\q(t\w)$ is a vector field depending smoothly on $t$ with $A\q(0\w)\equiv 0$,
then the discussion in this note would be easy to generalize.
Unfortunately, not every $\gamma$ is of this form.
Fortunately, there is a simple alternative.
Given $\gamma$ define a vector field depending on $t$, $W\q(t\w)$, by
\begin{equation}\label{EqnDefnW}
W\q(t\w)=W\q(t,x\w) = \frac{d}{d\epsilon}\bigg|_{\epsilon=1} \gamma_{\epsilon t}\circ \gamma_t^{-1}\q(x\w)\in T_x\R^n.
\end{equation}
Note that $W\q(0\w)\equiv 0$.  Moreover, the map $\gamma\mapsto W$
is a bijection (in an appropriate sense).
Everywhere in this note where one might wish to use $A$,
it suffices to use $W$ instead.

We now informally state a special case of the main result of
\cite{StreetMultiParameterSingRadonLt,
SteinStreetMultiParameterSingRadonLp},
but refer the reader to those papers for a more rigorous
statement in full generality.
The setting is the same as Theorem \ref{ThmMainThm}, and
so we are given a decomposition
$\R^N=\R^{N_1}\times \cdots \times \R^{N_\nu}$.
We are interested in the $L^p$ boundedness of operators of the form
\begin{equation*}
Tf\q(x\w) = \psi\q(x\w) \int f\q(\gamma_t\q(x\w)\w) K\q(t\w)\: dt,
\end{equation*}
where $K$ is a product kernel relative to this decomposition
(with small support).  $\gamma$ is a $C^\infty$ function
satisfying $\gamma_0\q(x\w)\equiv x$, but we do not
assume that $\gamma$ is given by \eqref{EqnGenExp}.

For a set of vector fields $\sV$, let $\sD\q(\sV\w)$ denote the
involutive distribution generated by $\sV$; that is, let
$\sD\q(\sV\w)$ be the smallest $C^\infty$ module containing
$\sV$ and closed under Lie brackets.

Given $\gamma$ define $W$ by \eqref{EqnDefnW}.
Decompose $W$ as a Taylor series in $t$,
\begin{equation*}
W\q(t\w) \sim \sum_{0<\q|\alpha\w|} t^{\alpha} X_\alpha,
\end{equation*}
where each $X_\alpha$ is a $C^\infty$ vector field.
We define pure powers and non-pure powers just as in
Section \ref{SectionResults}.
Our assumptions are (informally stated) as follows:
\begin{enumerate}[(i)]
\item For every $\delta\in\q(0,1\w]^\nu$,
\begin{equation*}
\sD_\delta = \sD\q(\q\{\delta_1^{\q|\alpha_1\w|} \cdots \delta_\nu^{\q|\alpha_\nu\w|} X_\alpha:\alpha \text{ is a pure power}\w\}\w)
\end{equation*}
is finitely generated as a $C^\infty$ module ``uniformly'' in $\delta$.

\item $W\q(\delta t\w)=W\q(\delta_1 t_1,\ldots, \delta_\nu t_\nu\w) \in \sD_\delta$ ``uniformly'' in $\delta\in \q(0,1\w]^\nu$.
\end{enumerate}

\begin{rmk}
Note that, if it were not for the ``uniform'' part of the above
assumptions, they would be independent of $\delta$.  Thus it is the
uniform part which is the heart of the above assumptions.
We refer the reader to \cite{StreetMultiParameterSingRadonLt}
for a more detailed discussion of these assumptions.
\end{rmk}

\begin{thm}
Under the above assumptions (made precise in \cite{StreetMultiParameterSingRadonLt})
the operator $T:L^p\rightarrow L^p$ ($1<p<\infty$).
\end{thm}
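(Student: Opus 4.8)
The plan is to transcribe the argument of Sections~\ref{SectionScaling}--\ref{SectionGeneral}, with the finite exponential $\exp(\sum_{0<|\alpha|\le L}t^\alpha X_\alpha)$ replaced throughout by the flow attached to the time-dependent vector field $W(t)$ of \eqref{EqnDefnW}. The point is that the special form of $\gamma$ entered the earlier proof only through (a) the Taylor coefficients $X_\alpha$ of the generating vector field, to which the pure/non-pure-power dichotomy and the sets $\sP,\sN,\sS$ are applied, and (b) the quantitative Frobenius theorem, Theorem~\ref{ThmFrob}; and both survive the replacement once assumptions (i) and (ii) are in force. From \eqref{EqnDefnW} one sees that $\epsilon\mapsto\gamma_{\epsilon t}(x)$ solves an ODE driven by $W(\epsilon t)$, so $\gamma_t$ is a flow built from $W$; composing two such flows and applying the Campbell--Hausdorff formula produces, for $\gamma_{2^{-k}t}\circ\gamma_{2^{-j}s}^{-1}$ conjugated by a Frobenius scaling map, an asymptotic expansion of the form \eqref{EqnThetaAsym} whose coefficients are iterated Lie brackets of the $X_\alpha$, each carrying the dilation weight $2^{-(k-j_0)\cdot\deg(\cdot)}$ or $2^{-(j-j_0)\cdot\deg(\cdot)}$ that it had in \eqref{EqnThetaAsExp}.

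First I would extract from assumption (i) a finite list $(X_1,d_1),\dots,(X_q,d_q)$ with $\nu$-parameter formal degrees such that for every $\delta\in(0,1]^\nu$ the rescaled fields $\delta^{d_1}X_1,\dots,\delta^{d_q}X_q$ generate $\sD_\delta$ and satisfy $[\delta^{d_j}X_j,\delta^{d_k}X_k]=\sum_l c_{j,k}^{l,\delta}\,\delta^{d_l}X_l$ with the $c_{j,k}^{l,\delta}\in C^\infty$ bounded uniformly in $\delta$ --- this is the precise meaning of ``finitely generated uniformly in $\delta$,'' and it is exactly the hypothesis of Theorem~\ref{ThmFrob}. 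Specializing $\delta=2^{-j_0}$, $j_0\in\N^\nu$, and varying the base point $x_0$ yields the scaling maps $\Phi_{x_0,j_0}$ with all estimates uniform in $(x_0,j_0)$. Assumption (ii), applied with $\delta=2^{-(k-j_0)}$ and $\delta=2^{-(j-j_0)}$, says precisely that the vector fields in the two exponentials of the pulled-back map $\theta_{t,s}$ lie in the $C^\infty$-span (uniformly) of the $\Phi$-pullbacks $Y_l$ of the $\delta^{d_l}X_l$; since for a pure power one of the two dilation factors is $1$, each pure-power $Y_\alpha$ appears unscaled in at least one exponential, and --- because the $Y_l$ span the tangent space uniformly and are Lie combinations of the pure-power $Y_\alpha$ --- the full collection of vector fields in \eqref{EqnThetaAsym} satisfies H\"ormander's condition uniformly in $j,k,x_0$.

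The remaining steps are then unchanged. Decompose $K=\sum_{j\in\N^\nu}\dil{\eta_j}{2^j}$ by Proposition~\ref{PropDecompProdKer}, write $T=\sum_j T_j$, and prove $\LtOpN{T_j^*T_k},\LtOpN{T_jT_k^*}\lesssim 2^{-\epsilon|j-k|}$ by conjugating $T_k^*T_jT_j^*T_k$ by $\Phi_{x_0,j_0}$ ($j_0=j\wedge k$), raising to the $(n+1)$st power, and invoking the single-parameter smoothing-and-cancellation estimates of \cite{ChristNagelSteinWaingerSingularAndMaximalRadonTransforms} as in Section~\ref{SectionL2}; Cotlar--Stein gives $L^2$ boundedness. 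Build the Littlewood--Paley theory (Theorems~\ref{ThmSquare} and \ref{ThmReproduce}) as in Section~\ref{SectionLittlewood}, the single-parameter square-function pieces $S_\mu$ being Calder\'on--Zygmund operators on the leaves of the Frobenius foliation of the $\mu$th block, with uniform leaf charts again from Theorem~\ref{ThmFrob}. Finally form the vector-valued operators $\sT_{k_1,k_2}$, establish $\LplqOpN{2}{2}{\sT_{k_1,k_2}}\lesssim 2^{-\epsilon(|k_1|+|k_2|)}$ and $\LplqOpN{1}{1}{\sT_{k_1,k_2}}\lesssim 1$, interpolate, bootstrap against the maximal function exactly as in the three-step completion-of-the-proof argument for Theorem~\ref{ThmMainThm}, and conclude via Proposition~\ref{PropToShowVectT}.

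The main obstacle is making the two ``uniform in $\delta$'' hypotheses do their work in the presence of two genuinely non-polynomial features: $\gamma$ is a flow rather than a single exponential, and $W$ is only \emph{asymptotic} to $\sum_{|\alpha|>0}t^\alpha X_\alpha$. Thus $\theta_{t,s}$ is not literally a product of two exponentials of finite sums, and one must (a) set up an asymptotic-expansion calculus that controls the remainder $O(|t|^{L+1})$ in \eqref{EqnThetaAsym} \emph{after} the Frobenius rescaling --- the rescaling must not degrade the error bounds --- and (b) verify that the Campbell--Hausdorff combination of the two flows, bracket by bracket, still lands in $\sD_\delta$ uniformly, which is where assumption (ii) is used in full strength. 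Once the bookkeeping of these remainders is organized the rest is a routine transcription of the finite-exponential case; the precise formulation of ``uniformly in $\delta$'' and the attendant estimates are carried out in \cite{StreetMultiParameterSingRadonLt, SteinStreetMultiParameterSingRadonLp}.
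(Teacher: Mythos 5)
Your sketch follows exactly the route the paper indicates: assumption (i) is identified as the hypothesis of the quantitative Frobenius theorem (Theorem~\ref{ThmFrob}), assumption (ii) plays the role of the algebraic/finite-type control after pulling back by $\Phi_{x_0,j_0}$, and the remaining machinery (dyadic decomposition of $K$, Cotlar--Stein for $L^2$, the Littlewood--Paley square function and reproducing formula, the vector-valued operators $\sT_{k_1,k_2}$, interpolation and the maximal-function bootstrap) is reused unchanged with $W(t)$ in place of the finite exponential. This matches the argument the paper outlines in Sections~\ref{SectionScaling}--\ref{SectionGeneral} and defers in detail to the cited works; you have also correctly flagged the one genuinely new technical burden, namely controlling the remainder in \eqref{EqnThetaAsym} uniformly through the Frobenius rescaling.
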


\begin{rmk}
There is also a corresponding maximal result.  See
\cite{SteinStreetMultiParameterSingRadonLp}.
\end{rmk}

\begin{rmk}
If $\gamma_t\q(x\w) = \exp\q(A\q(t\w)\w)x$, where $A\q(0\w)\equiv 0$,
then the above assumptions can be (equivalently) stated
with $W$ replaced by $A$.  The point of using $W$ is that that
every $\gamma$ has a corresponding $W$, while not every $\gamma$
has a corresponding $A$.
\end{rmk}

\begin{rmk}
In the single parameter case ($\nu=1$), the assumptions of
\cite{ChristNagelSteinWaingerSingularAndMaximalRadonTransforms}
can be equivalently stated as: the $\q\{X_\alpha\w\}$ satisfy
H\"ormander's condition.  This is a special case of the above assumptions.
If the $X_\alpha$ do not satisfy H\"ormander's condition, the above assumptions
still hold (in the case $\nu=1$) if:
\begin{enumerate}[(i)]
\item The involutive distribution
generated by the $\q\{X_\alpha\w\}$ is finitely generated as
a $C^\infty$-module.
\item $W$ is tangent to the leaves of the foliation corresponding
to this distribution in an appropriate ``scale invariant'' way.
\end{enumerate}
When the $\q\{X_\alpha\w\}$ satisfy H\"ormander's condition, this
involutive distribution is the entire space of smooth sections
of the tangent bundle (and is therefore clearly finitely generated)
and there is only one leaf (the entire space).  Thus, $W$ is
trivially tangent to this leaf (the scale invariance turns out
to follow as well).
\end{rmk}

\begin{rmk}
We have not separated the above conditions into analogs
of the finite type and algebraic conditions
discussed in Section \ref{SectionResults}, however
this can be done.  See
\cite{StreetMultiParameterSingRadonAnal}.
\end{rmk}

There is another way in which our cited work is more general
than what is discussed here:  we deal with a larger class
of singular kernels $K$ than product kernels.
We defer a general discussion to \cite{StreetMultiParameterSingRadonLt},
and instead discuss a special case that illustrates the ideas.
The setting is the three dimensional Heisenberg group $\Ho$.
As a manifold $\Ho$ is $\R^3$ and we write $\q(x,y,t\w)$ for
the coordinates of $\Ho$.
For $j=\q(j_1,j_2\w)\in \Z$ and $\q(x,y,t\w)\in \Ho$, write
$2^j \q(x,y,t\w)=\q(2^{j_1} x, 2^{j_2} y, 2^{j_1+j_2}t\w)$; and
for a function $f\q(x,y,t\w)$ write $\dil{f}{2^j}\q(x,y,t\w)= 2^{2j_1+2j_2} f\q(2^{j_1}x,2^{j_2}y, 2^{j_1+j_2}t\w)$.

Let $\q\{\eta_j\w\}_{j\in \Z^2}\subseteq C_0^\infty\q(B^3\q(1\w)\w)$ be a bounded set
such that
\begin{equation*}
\int \eta_j\q(x,y,t\w) \: dx\: dt=0=\int \eta_j\q(x,y,t\w)\: dy\: dt, \quad \forall j\in \Z^2.
\end{equation*}
Let $K$ be a distribution defined by
\begin{equation*}
K\q(x,y,t\w)=\sum_{j\in \Z^2} \dil{\eta_j}{2^j}\q(x,y,t\w).
\end{equation*}
\begin{prop}\label{PropToy1}
The operator defined by $f\mapsto f*K$ is bounded on $L^p$ ($1<p<\infty$)
where the convolution is taken in the sense of $\Ho$.
\end{prop}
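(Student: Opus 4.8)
The plan is to recognize this as the model case of the general theory applied to the Heisenberg group, where the convolution operator $f \mapsto f*K$ is exactly of the form \eqref{EqnIntroT} with $\gamma$ given by right-translation: taking $x\in\Ho$ and $t'=\q(x',y',t'_3\w)\in\Ho$, set $\gamma_{t'}\q(x\w) = x\cdot t'$, which is the exponential of a sum of right-invariant vector fields and satisfies $\gamma_0\q(x\w)\equiv x$. The kernel $K\q(x,y,t\w)=\sum_j \dil{\eta_j}{2^j}$ is, by Proposition \ref{PropDecompProdKer} adapted to the Heisenberg dilations $2^j\q(x,y,t\w)=\q(2^{j_1}x,2^{j_2}y,2^{j_1+j_2}t\w)$, precisely a flag-type (here product-type with respect to the two factors $\q(x,t\w)$ and $\q(y,t\w)$) kernel, and the two cancellation conditions $\int\eta_j\,dx\,dt=0=\int\eta_j\,dy\,dt$ are the analogs of \eqref{EqnProdKernelCancel}. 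So the operator decomposes as $\sum_{j\in\Z^2}T_j$ with $T_j f = f*\dil{\eta_j}{2^j}$.

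First I would set up the almost-orthogonality: the goal is the analog of \eqref{EqnToShowAO2}, namely $\LtOpN{T_j^*T_k},\LtOpN{T_jT_k^*}\lesssim 2^{-\epsilon\q|j-k\w|}$, after which $L^2$ boundedness is Cotlar--Stein. On the Heisenberg group this is especially transparent: convolution operators are simultaneously diagonalized (after taking the group Fourier transform) and the multi-parameter dilation structure makes the supports of the pieces $\widehat{\dil{\eta_j}{2^j}}$ essentially disjoint at well-separated scales; the cancellation conditions give vanishing on the relevant coordinate subspaces, which upgrades the disjointness to the quantitative decay $2^{-\epsilon\q|j-k\w|}$. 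Alternatively, and in the spirit of Section \ref{SectionL2}, one computes $T_j^*T_k$ as convolution against $\widetilde{\dil{\eta_j}{2^j}}*\dil{\eta_k}{2^k}$ (tilde = adjoint kernel) and exploits the nilpotent-group Campbell--Hausdorff formula together with the cancellation to see that this convolved kernel, after the appropriate Heisenberg rescaling to the common coarser scale $j\wedge k$, is a smooth bump with a gain; here the role of the quantitative Frobenius theorem (Theorem \ref{ThmFrob}) is trivial since the left-invariant vector fields already span and satisfy H\"ormander's condition uniformly. Then for $L^p$ with $1<p<\infty$ I would invoke the Littlewood--Paley machinery of Section \ref{SectionLittlewood}: the square function $\q\|\q(\sum_j\q|D_jf\w|^2\w)^{1/2}\q\|_p\approx\q\|f\q\|_p$ (Theorem \ref{ThmSquare}, which on $\Ho$ reduces to classical two-parameter Littlewood--Paley theory for the Heisenberg dilations), the Calder\'on reproducing formula (Theorem \ref{ThmReproduce}), and the vector-valued estimate of Proposition \ref{PropToShowVectT}, whose hypothesis $\LplqOpN{p}{2}{\sT_{k_1,k_2}}\lesssim 2^{-\epsilon\q(\q|k_1\w|+\q|k_2\w|\w)}$ follows by interpolating the $L^2$ estimate above against the trivial $L^1\q(\ell^1\w)$ bound, using the $L^p$-boundedness of the Heisenberg maximal function to control the endpoint $L^p\q(\ell^\infty\w)$ piece (Step 3 of Section \ref{SectionGeneral}'s scheme).

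The main obstacle — though in this Heisenberg model case it is quite mild — is organizing the cancellation correctly: one must verify that the two one-directional mean-zero conditions on the $\eta_j$ genuinely produce a product (flag) kernel with the recursive structure of Definition \ref{DefnProdKer}, so that Proposition \ref{PropDecompProdKer} applies and the Littlewood--Paley pieces $D_j$ are correctly matched to the two-parameter dilation group. Once that bookkeeping is in place, every analytic input is either classical (Heisenberg Littlewood--Paley and maximal estimates) or already available from the $\nu=1$ techniques of \cite{ChristNagelSteinWaingerSingularAndMaximalRadonTransforms} applied scale-by-scale, and the proof assembles along the lines of Sections \ref{SectionL2}--\ref{SectionGeneral}.
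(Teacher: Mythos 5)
Your proposal has a genuine gap: you never verify (or even mention) the \emph{algebraic condition}, which is the single structural observation that makes this operator bounded and is the entire point of the example. The paper's argument consists of rewriting $f*K(\xi)=\int f\left(e^{xX+yY+tT}\xi\right)K(x,y,t)\,dx\,dy\,dt$ with $X,Y,T$ the left-invariant vector fields on $\Ho$ satisfying $[X,Y]=T$, assigning the formal degrees $(X,(1,0))$, $(Y,(0,1))$, $(T,(1,1))$ dictated by the dilations, noting that $T$ corresponds to a non-pure power, and then observing that $(T,(1,1))=([X,Y],(1,0)+(0,1))$ so that the algebraic condition \eqref{EqnAlgControl} holds, while the finite type condition is trivial. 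If you skip this check, nothing in your narrative distinguishes the Heisenberg case from the Euclidean convolution in the subsequent Remark, where $[\partial_x,\partial_y]=0$, the algebraic condition fails, and the operator is in fact unbounded; your Cotlar--Stein/Littlewood--Paley storyline would read identically there and yield a false conclusion.

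There is a secondary misstatement: $K$ here is \emph{not} a product kernel, nor a product kernel with respect to ``the two factors $(x,t)$ and $(y,t)$'' --- those are not disjoint factors of $\R^3$, and the coordinate $t$ scales as $2^{j_1+j_2}$, which is not realized by any decomposition $\R^3=\R^{N_1}\times\R^{N_2}$ with coordinatewise dilations. Proposition \ref{PropDecompProdKer} therefore does not literally apply, and the paper explicitly frames this example as illustrating a kernel class strictly larger than product kernels. The analytic machinery you rehearse (almost orthogonality, the square function, the vector-valued reduction, the quantitative Frobenius theorem being trivial because the $X,Y,T$ already span) is indeed what lies behind the general theorem, but re-deriving it without identifying and checking the hypotheses --- above all the algebraic condition --- misses the content of the Proposition.
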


The idea of Proposition \ref{PropToy1} is the following.  We may write
$f*K$ as
\begin{equation*}
f*K\q(\xi\w) = \int f\q(e^{xX +yY+tT} \xi\w) K\q(x,y,t\w)\: dt,
\end{equation*}
where $X,Y,T$ are the usual left invariant vector fields on $\Ho$
satisfying $\q[X,Y\w]=T$ and $T$ is in the center of the Lie algebra.
Because of our dilations we assign the formal degrees
$\q(X,\q(1,0\w)\w)$, $\q(Y,\q(0,1\w)\w)$, and $\q(T, \q(1,1\w)\w)$;
so that $T$ is a vector field corresponding to a non-pure power.
Note that
$\q(T,\q(1,1\w)\w)= \q(\q[X,Y\w],\q(1,0\w)+\q(0,1\w)\w)$ and so the algebraic
condition is satisfied.  The finite type condition trivially holds.

\begin{rmk}
If the convolution in Proposition \ref{PropToy1} is replaced with the convolution using the usual
group structure of $\R^3$, then
there are kernels $K$ (of the above type) such that the above operator is not bounded on $L^p$
for any $p$.
Moreover, the algebraic condition is not satisfied.  Indeed,
if the usual Euclidean group structure is used,
then $X$, $Y$, and $T$ would be replaced with
$\frac{\partial}{\partial x}$, $\frac{\partial}{\partial y}$, 
and $\frac{\partial}{\partial t}$, respectively.
Since $\q[\frac{\partial}{\partial x},\frac{\partial}{\partial y}\w]=0$,
the algebraic condition fails.
\end{rmk}

\bibliographystyle{amsalpha}

\bibliography{radon}

%\center{\it{University of Wisconsin-Madison, Department of Mathematics, 480 Lincoln Dr., Madison, WI, 53706}}

%\center{\it{street@math.wisc.edu}}

%\center{MCS2010: Primary 42B20, Secondary 42B25}

%\center{Keywords: Calder\'on-Zygmund theory, singular integrals, singular radon transforms, product kernels, flag kernels.}

\end{document}